\newtheorem{theorem}{Theorem}
\newtheorem{axiom}[theorem]{Axiom}
\newtheorem{conjecture}[theorem]{Conjecture}
\newtheorem{corollary}[theorem]{Corollary}
\newtheorem{definition}[theorem]{Definition}
\newtheorem{example}[theorem]{Example}
\newtheorem{exercise}[theorem]{Exercise}
\newtheorem{lemma}[theorem]{Lemma}
\newtheorem{proposition}[theorem]{Proposition}
\newtheorem{remark}[theorem]{Remark}
\newenvironment{proof}[1][Proof]{\noindent\textsc{#1.}\,\  }{\ \textsc{q.e.d.}\medskip}
\renewcommand{\sl}{\it}
\renewcommand{\iff}{\leftrightarrow}
\renewcommand{\textsl}{\textit}
\renewcommand{\geq}{\geqslant}
\chardef\@x10\chardef\@xv60
\def\tcitime{
\def\@time{%
  \@minute\time\@hour\@minute\divide\@hour\@xv
  \ifnum\@hour<\@x 0\fi\the\@hour:%
  \multiply\@hour\@xv\advance\@minute-\@hour
  \ifnum\@minute<\@x 0\fi\the\@minute
  }}%
\def\x@hyperref#1#2#3{%
   \catcode`\~ = 12
   \catcode`\$ = 12
   \catcode`\_ = 12
   \catcode`\# = 12
   \catcode`\& = 12
   \y@hyperref{#1}{#2}{#3}%
}
\def\y@hyperref#1#2#3#4{%
   #2\ref{#4}#3
   \catcode`\~ = 13
   \catcode`\$ = 3
   \catcode`\_ = 8
   \catcode`\# = 6
   \catcode`\& = 4
}
\def\QCTOpt[#1]#2{%
  \def\QCTOptB{#1}
  \def\QCTOptA{#2}
}
\def\QCTNOpt#1{%
  \def\QCTOptA{#1}
  \let\QCTOptB\empty
}
\def\Qct{%
  \@ifnextchar[{%
    \QCTOpt}{\QCTNOpt}
}
\def\QCBOpt[#1]#2{%
  \def\QCBOptB{#1}%
  \def\QCBOptA{#2}%
}
\def\QCBNOpt#1{%
  \def\QCBOptA{#1}%
  \let\QCBOptB\empty
}
\def\Qcb{%
  \@ifnextchar[{%
    \QCBOpt}{\QCBNOpt}%
}
\def\PrepCapArgs{%
  \ifx\QCBOptA\empty
    \ifx\QCTOptA\empty
      {}%
    \else
      \ifx\QCTOptB\empty
        {\QCTOptA}%
      \else
        [\QCTOptB]{\QCTOptA}%
      \fi
    \fi
  \else
    \ifx\QCBOptA\empty
      {}%
    \else
      \ifx\QCBOptB\empty
        {\QCBOptA}%
      \else
        [\QCBOptB]{\QCBOptA}%
      \fi
    \fi
  \fi
}
\def\GRAPHICSPS#1{%
 \ifcase\GRAPHICSTYPE
   \special{ps: #1}%
 \or
   \special{language "PS", include "#1"}%
 \fi
}%
\def\graffile#1#2#3#4{%
    \bgroup
	   \@inlabelfalse
       \leavevmode
       \@ifundefined{bbl@deactivate}{\def~{\string~}}{\activesoff}%
        \raise -#4 \BOXTHEFRAME{%
           \hbox to #2{\raise #3\hbox to #2{\null #1\hfil}}}%
    \egroup
}%
\def\draftbox#1#2#3#4{%
 \leavevmode\raise -#4 \hbox{%
  \frame{\rlap{\protect\tiny #1}\hbox to #2%
   {\vrule height#3 width\z@ depth\z@\hfil}%
  }%
 }%
}%
\let\nographics=\@msidraft
\newif\ifwasdraft
\def\GRAPHIC#1#2#3#4#5{%
   \ifnum\@msidraft=\@ne\draftbox{#2}{#3}{#4}{#5}%
   \else\graffile{#1}{#3}{#4}{#5}%
   \fi
}
\def\addtoLaTeXparams#1{%
    \edef\LaTeXparams{\LaTeXparams #1}}%
\newif\ifBoxFrame \BoxFramefalse
\newif\ifOverFrame \OverFramefalse
\newif\ifUnderFrame \UnderFramefalse
\def\BOXTHEFRAME#1{%
   \hbox{%
      \ifBoxFrame
         \frame{#1}%
      \else
         {#1}%
      \fi
   }%
}
\def\doFRAMEparams#1{\BoxFramefalse\OverFramefalse\UnderFramefalse\readFRAMEparams#1\end}%
\def\readFRAMEparams#1{%
 \ifx#1\end%
  \let\next=\relax
  \else
  \ifx#1i\dispkind=\z@\fi
  \ifx#1d\dispkind=\@ne\fi
  \ifx#1f\dispkind=\tw@\fi
  \ifx#1t\addtoLaTeXparams{t}\fi
  \ifx#1b\addtoLaTeXparams{b}\fi
  \ifx#1p\addtoLaTeXparams{p}\fi
  \ifx#1h\addtoLaTeXparams{h}\fi
  \ifx#1X\BoxFrametrue\fi
  \ifx#1O\OverFrametrue\fi
  \ifx#1U\UnderFrametrue\fi
  \ifx#1w
    \ifnum\@msidraft=1\wasdrafttrue\else\wasdraftfalse\fi
    \@msidraft=\@ne
  \fi
  \let\next=\readFRAMEparams
  \fi
 \next
 }%
\def\IFRAME#1#2#3#4#5#6{%
      \bgroup
      \let\QCTOptA\empty
      \let\QCTOptB\empty
      \let\QCBOptA\empty
      \let\QCBOptB\empty
      #6%
      \parindent=0pt
      \leftskip=0pt
      \rightskip=0pt
      \setbox0=\hbox{\QCBOptA}%
      \@tempdima=#1\relax
      \ifOverFrame
          \typeout{This is not implemented yet}%
          \show\HELP
      \else
         \ifdim\wd0>\@tempdima
            \advance\@tempdima by \@tempdima
            \ifdim\wd0 >\@tempdima
               \setbox1 =\vbox{%
                  \unskip\hbox to \@tempdima{\hfill\GRAPHIC{#5}{#4}{#1}{#2}{#3}\hfill}%
                  \unskip\hbox to \@tempdima{\parbox[b]{\@tempdima}{\QCBOptA}}%
               }%
               \wd1=\@tempdima
            \else
               \textwidth=\wd0
               \setbox1 =\vbox{%
                 \noindent\hbox to \wd0{\hfill\GRAPHIC{#5}{#4}{#1}{#2}{#3}\hfill}\\%
                 \noindent\hbox{\QCBOptA}%
               }%
               \wd1=\wd0
            \fi
         \else
            \ifdim\wd0>0pt
              \hsize=\@tempdima
              \setbox1=\vbox{%
                \unskip\GRAPHIC{#5}{#4}{#1}{#2}{0pt}%
                \break
                \unskip\hbox to \@tempdima{\hfill \QCBOptA\hfill}%
              }%
              \wd1=\@tempdima
           \else
              \hsize=\@tempdima
              \setbox1=\vbox{%
                \unskip\GRAPHIC{#5}{#4}{#1}{#2}{0pt}%
              }%
              \wd1=\@tempdima
           \fi
         \fi
         \@tempdimb=\ht1
         \advance\@tempdimb by -#2
         \advance\@tempdimb by #3
         \leavevmode
         \raise -\@tempdimb \hbox{\box1}%
      \fi
      \egroup%
}%
\def\DFRAME#1#2#3#4#5{%
  \hfil\break
  \bgroup
     \leftskip\@flushglue
	 \rightskip\@flushglue
	 \parindent\z@
	 \parfillskip\z@skip
     \let\QCTOptA\empty
     \let\QCTOptB\empty
     \let\QCBOptA\empty
     \let\QCBOptB\empty
	 \vbox\bgroup
        \ifOverFrame 
           #5\QCTOptA\par
        \fi
        \GRAPHIC{#4}{#3}{#1}{#2}{\z@}%
        \ifUnderFrame 
           \break#5\QCBOptA
        \fi
	 \egroup
   \egroup
   \break
}%
\def\FFRAME#1#2#3#4#5#6#7{%
  \@ifundefined{floatstyle}
    {
     \begin{figure}[#1]%
    }
    {
	 \ifx#1h
      \begin{figure}[H]%
	 \else
      \begin{figure}[#1]%
	 \fi
	}
  \let\QCTOptA\empty
  \let\QCTOptB\empty
  \let\QCBOptA\empty
  \let\QCBOptB\empty
  \ifOverFrame
    #4
    \ifx\QCTOptA\empty
    \else
      \ifx\QCTOptB\empty
        \caption{\QCTOptA}%
      \else
        \caption[\QCTOptB]{\QCTOptA}%
      \fi
    \fi
    \ifUnderFrame\else
      \label{#5}%
    \fi
  \else
    \UnderFrametrue%
  \fi
  \begin{center}\GRAPHIC{#7}{#6}{#2}{#3}{\z@}\end{center}%
  \ifUnderFrame
    #4
    \ifx\QCBOptA\empty
      \caption{}%
    \else
      \ifx\QCBOptB\empty
        \caption{\QCBOptA}%
      \else
        \caption[\QCBOptB]{\QCBOptA}%
      \fi
    \fi
    \label{#5}%
  \fi
  \end{figure}%
 }%
\def\makeactives{
  \catcode`\"=\active
  \catcode`\;=\active
  \catcode`\:=\active
  \catcode`\'=\active
  \catcode`\~=\active
}
   \gdef\activesoff{%
      \def"{\string"}%
      \def;{\string;}%
      \def:{\string:}%
      \def'{\string'}%
      \def~{\string~}%
    }
\def\FRAME#1#2#3#4#5#6#7#8{%
 \bgroup
 \ifnum\@msidraft=\@ne
   \wasdrafttrue
 \else
   \wasdraftfalse%
 \fi
 \def\LaTeXparams{}%
 \dispkind=\z@
 \def\LaTeXparams{}%
 \doFRAMEparams{#1}%
 \ifnum\dispkind=\z@\IFRAME{#2}{#3}{#4}{#7}{#8}{#5}\else
  \ifnum\dispkind=\@ne\DFRAME{#2}{#3}{#7}{#8}{#5}\else
   \ifnum\dispkind=\tw@
    \edef\@tempa{\noexpand\FFRAME{\LaTeXparams}}%
    \@tempa{#2}{#3}{#5}{#6}{#7}{#8}%
    \fi
   \fi
  \fi
  \ifwasdraft\@msidraft=1\else\@msidraft=0\fi{}%
  \egroup
 }%
\def\TEXUX#1{"texux"}
\long\def\QQQ#1#2{%
     \long\expandafter\def\csname#1\endcsname{#2}}%
\long\def\QQA#1#2{}%
\def\QTR#1#2{{\csname#1\endcsname {#2}}}%
\def\EXPAND#1[#2]#3{}%
\def\NOEXPAND#1[#2]#3{}%
\def\LaTeXparent#1{}%
\def\ChildStyles#1{}%
\def\ChildDefaults#1{}%
\def\QTagDef#1#2#3{}%
  \providecommand{\UNICODE}[2][]{\protect\rule{.1in}{.1in}}
  \providecommand{\U}[1]{\protect\rule{.1in}{.1in}}
\def\QQfnmark#1{\footnotemark}
 \def\abstract{%
  \if@twocolumn
   \section*{Abstract (Not appropriate in this style!)}%
   \else \small 
   \begin{center}{\bf Abstract\vspace{-.5em}\vspace{\z@}}\end{center}%
   \quotation 
   \fi
  }%
   \def\registered{\relax\ifmmode{}\r@gistered
                    \else$\m@th\r@gistered$\fi}%
 \def\r@gistered{^{\ooalign
  {\hfil\raise.07ex\hbox{$\scriptstyle\rm\text{R}$}\hfil\crcr
  \mathhexbox20D}}}}{}%
\newdimen\theight
\def\newfmtname{LaTeX2e}
  \DeclareOldFontCommand{\rm}{\normalfont\rmfamily}{\mathrm}
  \DeclareOldFontCommand{\sf}{\normalfont\sffamily}{\mathsf}
  \DeclareOldFontCommand{\tt}{\normalfont\ttfamily}{\mathtt}
  \DeclareOldFontCommand{\bf}{\normalfont\bfseries}{\mathbf}
  \DeclareOldFontCommand{\it}{\normalfont\itshape}{\mathit}
  \DeclareOldFontCommand{\sl}{\normalfont\slshape}{\@nomath\sl}
  \DeclareOldFontCommand{\sc}{\normalfont\scshape}{\@nomath\sc}
\def\alpha{{\Greekmath 010B}}%
\def\beta{{\Greekmath 010C}}%
\def\gamma{{\Greekmath 010D}}%
\def\delta{{\Greekmath 010E}}%
\def\epsilon{{\Greekmath 010F}}%
\def\zeta{{\Greekmath 0110}}%
\def\eta{{\Greekmath 0111}}%
\def\theta{{\Greekmath 0112}}%
\def\iota{{\Greekmath 0113}}%
\def\kappa{{\Greekmath 0114}}%
\def\lambda{{\Greekmath 0115}}%
\def\mu{{\Greekmath 0116}}%
\def\nu{{\Greekmath 0117}}%
\def\xi{{\Greekmath 0118}}%
\def\pi{{\Greekmath 0119}}%
\def\rho{{\Greekmath 011A}}%
\def\sigma{{\Greekmath 011B}}%
\def\tau{{\Greekmath 011C}}%
\def\upsilon{{\Greekmath 011D}}%
\def\phi{{\Greekmath 011E}}%
\def\chi{{\Greekmath 011F}}%
\def\psi{{\Greekmath 0120}}%
\def\omega{{\Greekmath 0121}}%
\def\varepsilon{{\Greekmath 0122}}%
\def\vartheta{{\Greekmath 0123}}%
\def\varpi{{\Greekmath 0124}}%
\def\varrho{{\Greekmath 0125}}%
\def\varsigma{{\Greekmath 0126}}%
\def\varphi{{\Greekmath 0127}}%
\def\nabla{{\Greekmath 0272}}
\def\FindBoldGroup{%
   {\setbox0=\hbox{$\mathbf{x\global\edef\theboldgroup{\the\mathgroup}}$}}%
}
\def\Greekmath#1#2#3#4{%
    \if@compatibility
        \ifnum\mathgroup=\symbold
           \mathchoice{\mbox{\boldmath$\displaystyle\mathchar"#1#2#3#4$}}%
                      {\mbox{\boldmath$\textstyle\mathchar"#1#2#3#4$}}%
                      {\mbox{\boldmath$\scriptstyle\mathchar"#1#2#3#4$}}%
                      {\mbox{\boldmath$\scriptscriptstyle\mathchar"#1#2#3#4$}}%
        \else
           \mathchar"#1#2#3#4%
        \fi 
    \else 
        \FindBoldGroup
        \ifnum\mathgroup=\theboldgroup 
           \mathchoice{\mbox{\boldmath$\displaystyle\mathchar"#1#2#3#4$}}%
                      {\mbox{\boldmath$\textstyle\mathchar"#1#2#3#4$}}%
                      {\mbox{\boldmath$\scriptstyle\mathchar"#1#2#3#4$}}%
                      {\mbox{\boldmath$\scriptscriptstyle\mathchar"#1#2#3#4$}}%
        \else
           \mathchar"#1#2#3#4%
        \fi     	    
	  \fi}
\newif\ifGreekBold  \GreekBoldfalse
\let\SAVEPBF=\pbf
\def\pbf{\GreekBoldtrue\SAVEPBF}%
  \newcounter{equationnumber}  
  \def\mathletters{%
     \addtocounter{equation}{1}
     \edef\@currentlabel{\theequation}%
     \setcounter{equationnumber}{\c@equation}
     \setcounter{equation}{0}%
     \edef\theequation{\@currentlabel\noexpand\alph{equation}}%
  }
    \def\BibTeX{{\rm B\kern-.05em{\sc i\kern-.025em b}\kern-.08em
                 T\kern-.1667em\lower.7ex\hbox{E}\kern-.125emX}}}{}%
\def\AmS{{\protect\usefont{OMS}{cmsy}{m}{n}%
                A\kern-.1667em\lower.5ex\hbox{M}\kern-.125emS}}}{}%
\def\@@eqncr{\let\@tempa\relax
    \ifcase\@eqcnt \def\@tempa{& & &}\or \def\@tempa{& &}%
      \else \def\@tempa{&}\fi
     \@tempa
     \if@eqnsw
        \iftag@
           \@taggnum
        \else
           \@eqnnum\stepcounter{equation}%
        \fi
     \fi
     \global\tag@false
     \global\@eqnswtrue
     \global\@eqcnt\z@\cr}
\def\TCItag{\@ifnextchar*{\@TCItagstar}{\@TCItag}}
\def\@TCItag#1{%
    \global\tag@true
    \global\def\@taggnum{(#1)}}
\def\@TCItagstar*#1{%
    \global\tag@true
    \global\def\@taggnum{#1}}
\renewcommand{\phi}{\varphi}
\providecommand{\psi}{\psi}
\begin{document}

\title{Are There Enough Injective Sets?}
\author{Peter Aczel, Benno van den Berg, Johan Granstr\"{o}m, and Peter
Schuster}
\maketitle

\begin{abstract}
The axiom of choice ensures precisely that, in ZFC, every set is
projective:\ that is, a projective object in the category of sets. In
constructive ZF (CZF) the existence of enough projective sets has been
discussed as an additional axiom taken from the interpretation of CZF in
Martin-L\"{o}f's intuitionistic type theory. On the other hand, every
non--empty set is injective in classical ZF, which argument fails to work in
CZF. The aim of this paper is to shed some light on the problem whether
there are (enough) injective sets in CZF.

We show that no two element set is injective unless the law of excluded
middle is admitted for negated formulas, and that the axiom of power set is
required for proving that \textquotedblleft there are strongly enough
injective sets\textquotedblright . The latter notion is abstracted from the
singleton embedding into the power set, which ensures enough injectives both
in every topos and in IZF. We further show that it is consistent with CZF to
assume that the only injective sets are the singletons. In particular,
assuming the consistency of CZF one cannot prove in CZF that there are
enough injective sets. As a complement we revisit the duality between
injective and projective sets from the point of view of intuitionistic type
theory.
\end{abstract}

\section{Introduction}

What are injective objects good for? In abelian categories, injective
resolutions are used to define and compute the right derived functors of a
left exact covariant functor. A famous instance is the cohomology theory of
sheaves, which has contributed to the settling of Fermat's conjecture \cite%
{McLarty}. \textquotedblleft A standard method is:\ Take a resolution, apply
a covariant functor $T$ \ldots , take the [co]homology of the resulting
[co]complex. This gives a connected sequence of functors, called the derived
functors of $T$.\textquotedblright\ \cite[p.~389]{MacLane}~

To have injective resolutions one needs to have what is called enough
injective objects:\ that is, every object can be embedded into an injective
object. For the prime example of an abelian category, the category of
abelian groups,\textbf{\ }\textquotedblleft \ldots\ the usual proof \ldots\
consists of two major steps. First every abelian group is a subgroup of a
divisible one. Second, all divisible abelian groups are injective. \ldots\
the first step can be carried out in \textbf{ZFA}. \ldots\ the second step
is \ldots\ equivalent to the axiom of choice.\textquotedblright\ \cite[p.~34]%
{Blass}\footnote{\textbf{ZFA }denotes \textbf{ZF} with atoms.}

In \textbf{ZFC }the axiom of choice ensures precisely that every set is
projective:\ that is, a projective object in the category of sets. In
constructive \textbf{ZF} (\textbf{CZF}) \cite{Aczel78,Aczel82,acra}, the
framework of the present note, the existence of enough projective sets has
been discussed as an additional axiom. This presentation axiom is taken from
the interpretation \cite{Aczel78} of \textbf{CZF }in Martin-L\"{o}f's
intuitionistic type theory \textbf{ITT }\cite{ITT}. The dual notion of an
injective set is trivial in \textbf{ZF}, where a set is an injective\textbf{%
\ }object of the category of sets precisely when it is non--empty. In
particular, there are enough injective sets in \textbf{ZF}, which argument
will turn out to fail in \textbf{CZF}.

More precisely, we will show that no two element set is injective unless the
law of excluded middle is admitted for negated formulas; and that the axiom
of power set is required for proving that \textquotedblleft there are
strongly enough injective sets\textquotedblright . The latter notion is
abstracted from the singleton embedding into the power set, which ensures
enough injective objects in every topos \cite[IV.10, Corollary 3]{MM} and
likewise in the intuitionistic \textbf{ZF} (\textbf{IZF}) from \cite%
{Friedman}. We further give an argument that it is consistent with \textbf{%
CZF} to assume that the only injective sets are the singletons. In
particular, assuming the consistency of \textbf{CZF }it cannot be proved in 
\textbf{CZF }that there are enough injective sets. As a complement we
revisit the duality between injective and projective sets from \textbf{ITT}%
's perspective.

\section{Preliminaries}

\subsection{Constructive Set Theories}

The framework of this paper is the constructive Zermelo--Fraenkel set theory
(\textbf{CZF}) begun with \cite{Aczel78}. While \textbf{CZF} is formulated
in the same language as \textbf{ZF}, it is based on intuitionistic rather
than classical logic; from \textbf{CZF} one arrives at \textbf{ZF} by adding
the law of excluded middle. Moreover, the axiom of power set does not belong
to \textbf{CZF}, for this theory is also intended to be predicative (in a
generalised sense).

In most of the paper we can work in \textbf{CZF}'s fragment \textbf{CZF}$%
_{0} $ from \cite{acra} which has the following set--theoretic axioms and
axiom schemes: extensionality, pairing, union, replacement, restricted
separation, strong infinity, and mathematical induction. All these
principles will be recalled in the appendix. Apart from the different choice
of the underlying logic, the basic set theory from \cite[p.~36]{Devlin} is a
fragment of \textbf{ZF} that plays a role roughly analogous to the one
played by \textbf{CZF}$_{0}$ within \textbf{CZF}. In addition to \textbf{CZF}%
$_{0}$, we sometimes need to assume the principle of relativised dependent
choices (RDC) from \cite{Aczel82}.

The axiom scheme of restricted separation only allows separation by
restricted formulas, in which every quantifier must be bounded by a set. If
restricted separation and replacement are strengthened to full separation
and collection, respectively, and the axiom of power set is allowed, then
one obtains the intuitionistic Zermelo--Fraenkel set theory (\textbf{IZF})
initiated in \cite{Friedman}. Albeit of an impredicative nature, this 
\textbf{IZF} is still based on intuitionistic logic.

A set $S$ is \emph{inhabited} if it has an element. If a set theory---such
as \textbf{CZF }and \textbf{IZF}---is based on intuitionistic logic,
\textquotedblleft $S$ is inhabited\textquotedblright\ has to be
distinguished from \textquotedblleft $S$ is non--empty\textquotedblright :
the latter is the double negation of the former. For a similar reason we
need to recall that a set $X$ is \emph{detachable} from a superset $Y$ if
membership to $X$ is a decidable predicate on $Y$: i.e., 
\begin{equation*}
\forall y\in Y\,\left( y\in X\vee y\notin X\right) \,.
\end{equation*}%
A set $E$ is \emph{discrete }if any singleton subset is detachable from $E$
or, equivalently, if equality is a decidable relation on $E$: that is, 
\begin{equation*}
\forall u,v\in E\,\left( u=v\vee u\not=v\,\right) \,.
\end{equation*}%
A set $E$ is \emph{finitely enumerable} if there is $n\in \mathbb{N}$ and a
surjective map from $n$ to $E$. A set $E$ is \emph{finite }if $E$ is in
bijection to some $n\in \mathbb{N}$, which uniquely determined $n$ is the 
\emph{cardinality} of $E$. A set $E$ is finite if and only if it is finitely
enumerable and discrete.

For more details we refer to \cite{acra,MRR}. We assume that every map
between sets is a set.

\subsection{Injective Maps and Sets}

By an \emph{embedding} we understand an injective map. Given a subset $U$ of
a set $V$, the \emph{inclusion} $U\hookrightarrow V$ is an embedding. Every
embedding is the composition of a bijection followed by an inclusion. We say
that

\begin{itemize}
\item[---] a set $E$ is \emph{injective} if every map with codomain $E$ can
be extended to any superset of its domain;

\item[---] \emph{there are enough injective sets} if every set is the domain
of an embedding whose codomain is an injective set.
\end{itemize}

An object $E$ of a category is called injective if, given a monomorphism $%
X\rightarrow Y$, every morphism from $X$ to $E$ can be extended to a
morphism from $Y\ $to $E$. Since the (mono)morphisms in the category of sets
are precisely the (injective)\ maps, a set is injective if and only if it is
an injective object in the category of sets; whence there are enough
injective sets if and only if the category of sets has enough injective
objects. In particular, the notion of an injective set is a \emph{structural
notion}:\ that is, given any bijection between two sets, if one of them is
injective, then so is the other.

A set $X$ is said to be a \emph{retract }of a superset $Y$ if the inclusion $%
i:X\hookrightarrow Y$ has a left inverse: i.e., a map $r:Y\rightarrow X$
with $ri=\mathrm{id}_{X}$. Clearly, an inhabited set $X$ is a retract of any
superset $Y$ from which $X$ is detachable, in which case every map from $X$
to a set $E$ can be extended to $Y$. In this sense, an inhabited set $E$ is
injective with respect to inclusions $X\hookrightarrow Y$ of detachable
subsets. Apart from this, the following fragments of \textbf{ZF}'s property
\textquotedblleft the injective sets are precisely the non--empty
sets\textquotedblright\ remain valid in \textbf{CZF}.

\begin{lemma}
\label{singletoninhabited}Every singleton set is injective, and every
injective set is inhabited.
\end{lemma}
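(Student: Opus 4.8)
The plan is to prove the two halves separately, each by a direct construction that stays entirely within intuitionistic logic. For the first half, let $E=\{e\}$ be a singleton and suppose we are given $X\subseteq Y$ together with a map $f\colon X\to E$. Since $E$ has the single element $e$, the constant assignment $y\mapsto e$ yields a map $g\colon Y\to E$, which is a set by replacement applied to $Y$. I would then verify that $g$ extends $f$: for every $x\in X$ we have $f(x)\in E=\{e\}$, hence $f(x)=e=g(x)$, so $g|_{X}=f$. As $Y$ and $f$ were arbitrary, $E$ is injective. This direction uses only that any two maps into a singleton coincide, so no case distinction on any formula is invoked.

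For the second half, suppose $E$ is injective and let me produce an element of it. The idea is to apply the extension property to the empty map. Take the empty function $f\colon\emptyset\to E$ (the empty set of ordered pairs, which does have codomain $E$), and let $S=\{s\}$ be any singleton, so that $\emptyset\subseteq S$. By injectivity of $E$, the map $f$ extends to some $g\colon S\to E$, and then $g(s)$ is an element of $E$. Hence $E$ is inhabited.

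The point that deserves attention --- and the reason the conclusion is phrased as \emph{inhabited} rather than \emph{non-empty} --- is that the empty-map argument hands back a genuine witness $g(s)\in E$, not merely the double negation of the existence of one. Thus the construction establishes inhabitedness in the strong constructive sense outright, with no recourse to the law of excluded middle. The remaining tasks are routine: checking that the constant map and the empty map are bona fide sets (by replacement and the existence of $\emptyset$) and that the indicated restriction agrees with $f$ (by extensionality). I do not expect a genuine obstacle here; the only real content is the choice to test injectivity against the inclusion $\emptyset\hookrightarrow S$.
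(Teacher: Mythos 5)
Your proposal is correct and follows essentially the same route as the paper: the constant map handles the singleton case, and extending the empty map along $\emptyset\hookrightarrow\{s\}$ (the paper's $0\hookrightarrow 1$) produces a witness of inhabitedness. No gaps.
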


\begin{proof}
Clearly every map to a singleton can be extended to any superset of its
domain. If $E$ is an injective set, then the one and only map from $0$ to $E$
can be extended to a map from $1$ to $E$; whence $E$ is inhabited.
\end{proof}

The counterpart of the subsequent lemma holds in any category.

\begin{lemma}
\label{retract}An injective set is a retract of any given superset, and
every retract of an injective set is injective.
\end{lemma}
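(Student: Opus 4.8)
The plan is to prove both halves by the same device: invoke the defining extension property of an injective set for a judiciously chosen map, performing what amounts to a short diagram chase. Throughout, let $E$ denote the injective set in question, and recall that, by definition, $X$ being a retract of a superset $Y$ means that $X\subseteq Y$ and the inclusion $i\colon X\hookrightarrow Y$ admits a left inverse $r\colon Y\rightarrow X$ with $ri=\mathrm{id}_{X}$.

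For the first assertion, suppose $E\subseteq Y$ and write $i\colon E\hookrightarrow Y$ for the inclusion. I would apply injectivity of $E$ to the identity map $\mathrm{id}_{E}\colon E\rightarrow E$, which is a map with codomain $E$ and domain $E$. Since $Y$ is a superset of the domain, the extension property yields a map $r\colon Y\rightarrow E$ that agrees with $\mathrm{id}_{E}$ on $E$, that is, $ri=\mathrm{id}_{E}$. Hence $r$ is a left inverse of the inclusion, so $E$ is a retract of $Y$, as required.

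For the second assertion, let $X$ be a retract of $E$, so that $X\subseteq E$ with inclusion $i\colon X\hookrightarrow E$ and a retraction $r\colon E\rightarrow X$ satisfying $ri=\mathrm{id}_{X}$. To see that $X$ is injective, take any map $f\colon A\rightarrow X$ and any superset $B\supseteq A$. Composing with the inclusion gives $if\colon A\rightarrow E$, a map whose codomain is $E$; by injectivity of $E$ it extends to some $g\colon B\rightarrow E$ with $g|_{A}=if$. Setting $h=rg\colon B\rightarrow X$ then furnishes the required extension, since for every $a\in A$ one computes $h(a)=r(g(a))=r(i(f(a)))=(ri)(f(a))=f(a)$, so that $h|_{A}=f$.

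I do not expect a genuine obstacle: both parts are elementary and valid in any category, and nothing beyond \textbf{CZF}$_{0}$-level reasoning about maps and their restrictions is invoked. The only point deserving care is bookkeeping about codomains, since the extension property is stated for maps whose codomain is \emph{exactly} the injective set $E$. Thus in the second part one must first push $f$ forward into $E$ along $i$ before extending it, and only afterwards retract back along $r$; reversing this order would misapply the hypothesis. Because injectivity is a structural notion, the same argument is moreover insensitive to replacing the literal subset inclusions by arbitrary section–retraction pairs, which is why the statement transfers verbatim to any category.
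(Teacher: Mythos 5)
Your proof is correct. The paper actually states this lemma without proof, remarking only that its counterpart holds in any category, and your argument --- extending $\mathrm{id}_{E}$ to obtain the retraction, and extending $if$ and then composing with $r$ for the second half --- is precisely the standard categorical argument the authors have in mind.
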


Let $X$ be a set. An $E$\emph{--partition }of $X$ is a family, indexed by $E$%
, of pairwise disjoint subsets whose union equals $X$. An $E$--partition $%
\left( Y_{e}\right) _{e\in E}$ of a set $Y$ with $X\subseteq Y$ is an \emph{%
extension} of an $E$--partition $\left( X_{e}\right) _{e\in E}$ of $X$ if $%
X_{e}\subseteq Y_{e}$ for all $e\in E$.

The next lemma is clear from the obvious one--to--one correspondence between
maps from $X$ to $E$ and $E$--partitions of $X$, which in the context of 
\textbf{CZF }has proved useful before \cite{Expon,Shrink,Complet}.

\begin{lemma}
\label{part}A set $E$ is injective precisely when, for any pair of sets $%
X\subseteq Y$, every $E$--partition of $X$ can be extended to an $E$%
--partition of $Y$.
\end{lemma}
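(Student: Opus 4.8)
The plan is to exploit the one-to-one correspondence between maps $X \to E$ and $E$-partitions of $X$ that has just been invoked, and to verify that under this correspondence the relation ``is an extension of'' matches on the two sides. Once that dictionary is set up, both implications of the lemma become mere translations of the definition of injectivity.

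First I would spell out the correspondence explicitly and check it is legitimate in $\textbf{CZF}_{0}$. Given a map $f \colon X \to E$, I set $X_{e} = \{x \in X : \langle x,e\rangle \in f\}$ for each $e \in E$; the defining condition is restricted, so this uses only restricted separation, and the resulting family $(X_{e})_{e\in E}$ is pairwise disjoint (because $f$ is single-valued) with union $X$ (because $f$ is total), hence an $E$-partition of $X$. Conversely, from an $E$-partition $(X_{e})_{e\in E}$ of $X$ I would build the map as the set of pairs $f = \bigcup_{e\in E}\left(X_{e}\times\{e\}\right)$, which is a set by replacement and union; here the covering condition supplies, for each $x\in X$, an index $e$ with $x\in X_{e}$, and disjointness makes that index unique, so $f$ is a total single-valued map $X \to E$. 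These two passages are visibly mutually inverse.

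The key observation is then that a map $g \colon Y \to E$ restricts to $f$ on $X$ exactly when the $E$-partition $(Y_{e})_{e\in E}$ associated with $g$ extends the $E$-partition $(X_{e})_{e\in E}$ associated with $f$. Indeed, if $g$ restricts to $f$ then for $x \in X_{e}$ one has $g(x) = f(x) = e$, so $x \in Y_{e}$, giving $X_{e} \subseteq Y_{e}$; and conversely, if $X_{e} \subseteq Y_{e}$ for all $e$, then each $x \in X$ lies in a unique $X_{e}$, whence $x \in Y_{e}$ and $g(x) = e = f(x)$.

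With this in hand the two directions are immediate. For one direction I would assume $E$ injective; given $X \subseteq Y$ and an $E$-partition of $X$, I translate it to a map $X \to E$, extend that map to $Y$ by injectivity, and translate the extension back to an $E$-partition of $Y$, which by the key observation extends the original. The other direction runs the same chain in reverse, starting from an arbitrary map $f \colon X \to E$ and a superset $Y \supseteq X$. I do not expect a genuine obstacle; the only point deserving care is the constructive one, namely confirming that the passage from a partition back to a map stays within restricted separation, replacement, and union, and that existence and uniqueness of the relevant index come respectively from the covering and the disjointness conditions of a partition.
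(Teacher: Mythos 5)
Your proposal is correct and follows exactly the route the paper intends: the paper gives no written proof, declaring the lemma ``clear from the obvious one--to--one correspondence between maps from $X$ to $E$ and $E$--partitions of $X$,'' and your argument simply makes that correspondence (and its compatibility with the extension relation) explicit, including the constructive bookkeeping via restricted separation, replacement, and union.
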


Finite products of injective sets are injective. We next give a partial dual.

\begin{lemma}
\label{partcor}Let $I$ be a finite set, and $\left( E_{i}\right) _{i\in I}$
an $I$--partition of a set $E$. If $I$ is injective, and $E_{i}$ is an
injective set for every $i\in I $, then $E$ is injective.
\end{lemma}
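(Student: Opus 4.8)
The plan is to verify injectivity of $E$ directly from the definition, gluing together extensions produced separately on the blocks $E_i$. So I would start with an arbitrary map $f\colon X\to E$ and an inclusion $X\subseteq Y$, and aim to extend $f$ to $Y$. Let $\pi\colon E\to I$ be the index map of the partition, sending each $e\in E$ to the unique $i\in I$ with $e\in E_i$; this is a genuine function by replacement, since the $E_i$ cover $E$ (existence) and are pairwise disjoint (uniqueness). Composing gives $\pi\circ f\colon X\to I$, a map into the injective set $I$, which therefore extends to some $g\colon Y\to I$.

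Next I would split $Y$ along $g$. For each $i\in I$ set $Y_i=\{\,y\in Y : g(y)=i\,\}$ and $X_i=X\cap Y_i$; these are sets by restricted separation (the defining formula $g(y)=i$ is bounded by $g$), the $Y_i$ form an $I$-partition of $Y$, and $X_i\subseteq Y_i$. Because $g$ extends $\pi\circ f$, for $x\in X_i$ we get $\pi(f(x))=g(x)=i$, hence $f(x)\in E_i$; thus $f$ restricts to $f_i\colon X_i\to E_i$. As $E_i$ is injective and $X_i\subseteq Y_i$, each $f_i$ extends to some $h_i\colon Y_i\to E_i$. Defining $\tilde f\colon Y\to E$ by $\tilde f(y)=h_{g(y)}(y)$ then yields a well-defined function, formed as a set by replacement: each $y$ lies in exactly the domain $Y_{g(y)}$ of $h_{g(y)}$, and $h_{g(y)}(y)\in E_{g(y)}\subseteq E$. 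For $x\in X$ one has $g(x)=\pi(f(x))$ and $h_{g(x)}$ extends $f_{g(x)}$, so $\tilde f(x)=f(x)$; hence $\tilde f$ extends $f$ and $E$ is injective. Equivalently, the same argument runs in the language of Lemma~\ref{part}: regroup a given $E$-partition $(X_e)_{e\in E}$ of $X$ into the coarse $I$-partition $X^{(i)}=\bigcup_{e\in E_i}X_e$, extend it over $Y$ using injectivity of $I$, and refine each block using injectivity of $E_i$.

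The step that needs care, and the only place where constructivity bites, is the passage from ``for each $i$ there exists an extension $h_i$'' to an actual family $(h_i)_{i\in I}$: injectivity supplies existence but no canonical choice, so assembling the family requires selecting one $h_i$ per index. This is precisely where the hypothesis that $I$ is \emph{finite} enters, since finite choice---$\forall i\in n\,\exists Z\,\varphi(i,Z)\to\exists h\,(\mathrm{dom}\,h=n\wedge\forall i\in n\,\varphi(i,h(i)))$---is provable in \textbf{CZF}$_{0}$ by induction on $n\in\mathbb{N}$ using only pairing and union, and then transports across a bijection $I\cong n$. Thus both hypotheses on $I$ are genuinely used, for different reasons: injectivity of $I$ provides the single coarse extension $g$, while finiteness of $I$ provides the finite choice needed to glue the per-block extensions. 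Notably, power set and exponentiation, which are unavailable in \textbf{CZF}$_{0}$, are never invoked: the sets $Y_i$, the maps $h_i$, and the glued map $\tilde f$ are all produced by restricted separation and replacement alone.
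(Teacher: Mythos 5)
Your proposal is correct and follows essentially the same route as the paper's proof: extend the induced $I$-partition (equivalently, the map $\pi\circ f$) over $Y$ using injectivity of $I$, then extend each block map $X_i\to E_i$ using injectivity of $E_i$, and glue. Your explicit treatment of the finite-choice step is exactly the point the paper makes in the remark immediately following its proof, so nothing is missing or superfluous.
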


\begin{proof}
Let $X\subseteq Y$ be sets, and $f:X\rightarrow E$ a map. We use Lemma \ref%
{part}. Since $I$ is injective, the $I$--partition $\left( X_{i}\right)
_{i\in I}$ of $X$ defined by $X_{i}=f^{-1}\left( E_{i}\right) $ can be
extended to an $I$--partition $\left( Y_{i}\right) _{i\in I}$ of $Y$. For
each $i\in I$ the map $X_{i}\rightarrow E_{i}$ induced by $f$ can be
extended to a map $g_{i}:Y_{i}\rightarrow E_{i}$, because $E_{i}$ is
injective. Now the $g_{i}$ with $i\in I$ define a map $g:Y\rightarrow E$
that extends $f$.
\end{proof}

\noindent In the case of an \emph{arbitrary} index set $I$ one would need to invoke the axiom of choice to
choose for every $i\in I$ an extension $g_{i}$ of $f_{i}$. In \textbf{CZF},
however, choice functions {on \emph{finite }}sets can be defined as usual. 

\section{Injective Sets and Excluded Middle}

For every subset $p$ of $1$ it is plain that 
\begin{equation*}
\begin{array}{ccc}
p=0 & \iff & 0\notin p\,, \\ 
p=1 & \iff & 0\in p\,.%
\end{array}%
\end{equation*}%
If $\varphi $ is a restricted formula, then 
\begin{equation*}
p_{\varphi }=\{0:\varphi \}=\{x\in 1:\varphi \}
\end{equation*}%
is a subset of $1$, for which%
\begin{equation*}
\begin{array}{ccccc}
\varphi & \iff & 0\in p_{\varphi } & \iff & p_{\varphi }=1\,, \\ 
\lnot \varphi & \iff & 0\notin p_{\varphi } & \iff & p_{\varphi }=0\,.%
\end{array}%
\end{equation*}%
The \emph{Law of Restricted (Weak) Excluded Middle, }for short $\mathrm{%
R(W)EM}$, says that $\varphi \vee \lnot \varphi $ (respectively, $\lnot
\varphi \vee \lnot \lnot \varphi $) holds for every restricted formula $%
\varphi $.

\begin{proposition}
Each of the following is equivalent to $\mathrm{REM}$:

\begin{enumerate}
\item Every inhabited set is injective.

\item Every inhabited set is a retract of any given superset.
\end{enumerate}
\end{proposition}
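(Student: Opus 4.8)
The plan is to establish the two equivalences at once by proving the cycle $\mathrm{REM}\Longrightarrow(1)\Longrightarrow(2)\Longrightarrow\mathrm{REM}$, so that both displayed statements are caught between $\mathrm{REM}$ on either side.

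For $\mathrm{REM}\Longrightarrow(1)$ the first thing I would note is that $\mathrm{REM}$ already forces every subset to be detachable from every superset. Indeed, for a set $Y$, a subset $X\subseteq Y$ and an element $y\in Y$, the formula $y\in X$ is atomic and hence restricted, so $\mathrm{REM}$ gives $y\in X\vee y\notin X$; letting $y$ range over $Y$ this is precisely the detachability of $X$ from $Y$. Now given an inhabited set $E$, say with $e_{0}\in E$, an inclusion $X\subseteq Y$ and a map $f:X\rightarrow E$, I would define $g:Y\rightarrow E$ by cases on the decidable predicate ``$y\in X$'', setting $g(y)=f(y)$ when $y\in X$ and $g(y)=e_{0}$ otherwise. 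This is a total map extending $f$, so $E$ is injective. The implication $(1)\Longrightarrow(2)$ then requires nothing new: by Lemma~\ref{retract} an injective set is a retract of any given superset, so if every inhabited set is injective then every inhabited set is such a retract.

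The substance lies in $(2)\Longrightarrow\mathrm{REM}$. Given a restricted formula $\varphi$, I would use the subset $p_{\varphi}=\{x\in 1:\varphi\}$ of $1$ and consider the inclusion $X\subseteq Y$ with $Y=2=\{0,1\}$ and $X=\{1\}\cup p_{\varphi}$. Then $X$ is inhabited, since $1\in X$, while $0\in X$ holds exactly when $\varphi$ does. By $(2)$ there is a retraction $r:Y\rightarrow X$, so $r(x)=x$ for every $x\in X$. Since $r(0)\in X=\{1\}\cup p_{\varphi}$, membership in this union yields the disjunction $r(0)=1\vee r(0)\in p_{\varphi}$. In the second case $r(0)\in p_{\varphi}$ gives $\varphi$ at once. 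In the first case, if $\varphi$ held then $0\in p_{\varphi}\subseteq X$, whence $r(0)=0$ by the retraction property and so $1=0$, a contradiction; thus $r(0)=1$ forces $\lnot\varphi$. Either way $\varphi\vee\lnot\varphi$, which is $\mathrm{REM}$.

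I expect the main creative step to be this last construction: arranging $X$ and $Y$ so that a single retraction value $r(0)$ is compelled to decide $\varphi$, by making $X$ an inhabited set whose sole ``doubtful'' point $0$ belongs to it iff $\varphi$. The only other point demanding care is the (easy but essential) remark in the first implication that $y\in X$ is a restricted formula, so that $\mathrm{REM}$ by itself already delivers detachability of \emph{arbitrary} subsets and hence the full extension property; everything else is then bookkeeping with the correspondences between $p_{\varphi}$, $\varphi$ and $\lnot\varphi$ recorded above.
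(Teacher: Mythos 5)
Your proposal is correct and follows essentially the same route as the paper: REM gives detachability and hence the extension by cases, Lemma~\ref{retract} handles $(1)\Rightarrow(2)$, and the retraction of $2$ onto $X=p_{\varphi}\cup\{1\}$ recovers REM. The only cosmetic difference is in the last step, where you extract $\varphi\vee\lnot\varphi$ by splitting on $r(0)\in\{1\}\cup p_{\varphi}$ rather than, as the paper does, by using the discreteness of $2$ to decide $r(0)=0$ together with the equivalence $r(0)=0\iff\varphi$; both are sound.
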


\begin{proof}
To see that REM implies the first item, let $E$ be an inhabited set, and
take any $e\in E$. To show that $E$ is injective, let $Y$ be a superset of
the set $X$. With REM\ one can decide whether any given element of $Y$
belongs to $X$. Hence every $f:X\rightarrow E$ can be extended to $%
g:Y\rightarrow E$ by setting $g(y)=f(y)$ if $y\in Y$ and $g(y)=e$ if $%
y\notin X$.

By Lemma \ref{retract} the first item implies the second. To verify that REM
follows from the second item, let $\varphi $ be a restricted formula and
consider $X=$ $p_{\varphi }\cup \{1\}$ as a subset of $2$. If there is a map 
$r:2\rightarrow X$ whose restriction to $X$ is the identity on $X$, then $%
r(1)=1$ but%
\begin{equation*}
r(0)=0\iff 0\in X\iff 0\in p_{\varphi }\iff \varphi \,.
\end{equation*}%
In particular, we can decide $\varphi $, for by $X\subseteq 2$ we can decide 
$r(0)=0$.
\end{proof}

\begin{lemma}
\label{two}If there is an injective set $E$ that has an element $x_{0}$ such
that%
\begin{equation*}
\forall x\in E\,\left( x=x_{0}\vee x\neq x_{0}\right)
\end{equation*}%
and for which there is $x_{1}\in E$ with $x_{0}\neq x_{1}$, then $\mathrm{%
RWEM}$ holds.
\end{lemma}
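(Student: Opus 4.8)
The plan is to adapt the retraction argument used for $\mathrm{REM}$ in the preceding proposition, but to exploit the injectivity of $E$ together with the detachability of $x_0$ so as to extract only the \emph{weak} excluded middle. I would fix a restricted formula $\varphi$; then $\lnot\varphi$ is again restricted, so by restricted separation both $p_\varphi=\{0:\varphi\}$ and $p_{\lnot\varphi}=\{0:\lnot\varphi\}$ are subsets of $1$, and they are disjoint since $\varphi\wedge\lnot\varphi$ is contradictory. I would take $Y=1$ and $X=p_\varphi\cup p_{\lnot\varphi}\subseteq Y$.

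Next I would define a map $f:X\to E$ sending $0$ to $x_0$ on $p_\varphi$ and to $x_1$ on $p_{\lnot\varphi}$; this is single-valued precisely because $p_\varphi\cap p_{\lnot\varphi}=\emptyset$, and it lands in $E$ because $x_0,x_1\in E$. Since $E$ is injective and $X\subseteq Y$, the map $f$ extends to some $g:Y\to E$, so in particular $g(0)\in E$ is defined. I would then invoke the hypothesis on $x_0$: applying $\forall x\in E\,(x=x_0\vee x\neq x_0)$ to $g(0)$ gives that either $g(0)=x_0$ or $g(0)\neq x_0$.

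The two cases should yield the two disjuncts of $\mathrm{RWEM}$. If $g(0)=x_0$, then $\lnot\varphi$ is impossible: were $\lnot\varphi$ to hold we would have $0\in p_{\lnot\varphi}\subseteq X$, whence $g(0)=f(0)=x_1$ and so $x_0=x_1$, contradicting $x_0\neq x_1$; thus $\lnot\lnot\varphi$. If instead $g(0)\neq x_0$, then $\varphi$ is impossible: $\varphi$ would give $0\in p_\varphi\subseteq X$ and hence $g(0)=f(0)=x_0$, a contradiction; thus $\lnot\varphi$. Combining the two cases would give $\lnot\varphi\vee\lnot\lnot\varphi$, which is $\mathrm{RWEM}$ since $\varphi$ was an arbitrary restricted formula.

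I expect the only delicate points to be bookkeeping rather than genuine obstacles: checking that $\lnot\varphi$ is restricted so that $p_{\lnot\varphi}$ is a legitimate instance of restricted separation, and that $f$ really is a function, which is exactly the disjointness of its two pieces. The conceptual heart is understanding why the argument delivers only the weak law: because just the single value $x_0$ is assumed detachable, the observation $g(0)=x_0$ can only refute $\lnot\varphi$ rather than establish $\varphi$, so full $\mathrm{REM}$ is out of reach and $\mathrm{RWEM}$ is precisely what this asymmetry buys.
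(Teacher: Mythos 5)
Your proof is correct, but it takes a genuinely different route from the paper's. The paper first normalises $x_{0}=0$ and $x_{1}=1$ (using that injectivity is a structural notion), so that $2\subseteq E$, and then extends the inclusion $2\hookrightarrow E$ along the superset inclusion $2\subseteq 2\cup \{p_{\varphi }\}$; writing $x=g(p_{\varphi })$ it establishes the biconditional $p_{\varphi }=0\iff x=0$, so that the assumed decidability of $x=x_{0}$ transfers in one step to decidability of $p_{\varphi }=0$, which is exactly $\lnot \varphi \vee \lnot \lnot \varphi $. You instead work inside $1$: you extend the case-defined map on $p_{\varphi }\cup p_{\lnot \varphi }\subseteq 1$ to all of $1$ and inspect $g(0)$, extracting one disjunct of $\mathrm{RWEM}$ from each branch of the detachability hypothesis. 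Both arguments invoke injectivity once and the detachability of $x_{0}$ once, and both need $x_{0}\neq x_{1}$ to rule out the crossed case. What your route buys is that it avoids the renaming of $x_{0},x_{1}$ and the slightly exotic superset $2\cup \{p\}$ (whose extra element is a subset, rather than an element, of $2$); the modest price is having to verify that your case-defined $f$ is a single-valued set-function, which rests precisely on the disjointness of $p_{\varphi }$ and $p_{\lnot \varphi }$ as you note, whereas the paper's construction yields a clean equivalence that makes the transfer of decidability a one-liner.
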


\begin{proof}
Suppose that $E$ is a set as in the hypothesis. We may assume that $x_{0}=0$
and $x_{1}=1$. Let $\varphi $ be a restricted formula, and set $p=p_{\varphi
}$. Since $E$ is injective, there is a map $g:2\cup \{p\}\rightarrow E$
which extends the inclusion $2\hookrightarrow E$. In particular, $g\left(
0\right) =0$ and $g\left( 1\right) =1$. We now consider the element $x=$ $%
g\left( p\right) $ of $E$, for which 
\begin{equation*}
p=0\iff x=0\,.
\end{equation*}%
(If $p=0$, then clearly $x=0$, because $g\left( 0\right) =0$. Conversely, if 
$x=0$, then $0\notin p$, i.e.~$p=0$, for if $0\in p$, then $p=1$ and thus $%
x=1$, because $g\left( 1\right) =1$.) By assumption, $x=0\vee x\neq 0$ or,
equivalently, $p=0\vee p\neq 0$, which is to say that $\lnot \varphi \vee
\lnot \lnot \varphi \,$.
\end{proof}

\noindent In other words, \textrm{RWEM} follows from the statement that
every singleton $S$ can be embedded into an injective set $E$ such that the
image of $S$ is detachable from $E$ and has inhabited complement.

\begin{corollary}
\label{almost}If $\lnot \mathrm{RWEM}$, then for any injective set $E$ and
every $x_{0}\in E$ such that%
\begin{equation*}
\forall x\in E\,\left( x=x_{0}\vee x\neq x_{0}\right)
\end{equation*}%
there is no $x_{1}\in E$ with $x_{1}\neq x_{0}\,$.
\end{corollary}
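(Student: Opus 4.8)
The plan is to read this as the intuitionistic contrapositive of Lemma~\ref{two}. The conclusion to be established is a \emph{negative} statement---that there is no $x_{1}\in E$ with $x_{1}\neq x_{0}$---so it suffices to assume such an $x_{1}$ exists and to derive a contradiction with the standing hypothesis $\lnot\mathrm{RWEM}$. This mode of argument is entirely unproblematic constructively, since proving a negation $\lnot P$ means precisely deriving falsity from $P$.

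Concretely, I would fix an injective set $E$ together with an element $x_{0}\in E$ satisfying $\forall x\in E\,(x=x_{0}\vee x\neq x_{0})$, and assume $\lnot\mathrm{RWEM}$. To show that no $x_{1}\in E$ satisfies $x_{1}\neq x_{0}$, suppose toward a contradiction that some $x_{1}\in E$ has $x_{1}\neq x_{0}$. Since inequality is symmetric---being the negation of the symmetric relation of equality---we also obtain $x_{0}\neq x_{1}$, so the triple $E,x_{0},x_{1}$ now meets every hypothesis of Lemma~\ref{two}: $E$ is injective, $x_{0}$ is detachable in the required sense, and $x_{1}$ is an element with $x_{0}\neq x_{1}$.

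Applying Lemma~\ref{two} therefore yields $\mathrm{RWEM}$, which contradicts the assumption $\lnot\mathrm{RWEM}$. Hence the supposed $x_{1}$ cannot exist, which is exactly the assertion of the corollary.

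I do not expect a genuine obstacle here; the only point deserving a moment's care is the logical form. A naive classical contrapositive of the implication in Lemma~\ref{two} would carry us from $\lnot\mathrm{RWEM}$ to the negation of a conjunction, and intuitionistically this would not let us peel off the individual conjuncts. What rescues the argument is that the target is \emph{already} a negation: we may assume the offending existential, extract a witness, invoke Lemma~\ref{two} on it directly, and close the derivation against $\lnot\mathrm{RWEM}$---all squarely within intuitionistic logic.
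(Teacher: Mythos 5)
Your proof is correct and is exactly the argument the paper intends: the corollary is stated without proof as the immediate intuitionistic contrapositive of Lemma~\ref{two}, and your observation that this contraposition is constructively valid because the conclusion is itself a negation is precisely the point.
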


\noindent In other words, $\lnot \mathrm{RWEM}$ implies that any given
injective set is\emph{\ almost a singleton set}: that is, it has a singleton
subset (Lemma \ref{singletoninhabited}), and every detachable singleton
subset has empty complement.

\begin{theorem}
\label{weak}Each of the following is equivalent to $\mathrm{RWEM}$:

\begin{enumerate}
\item The set $2$ is injective.

\item Every disjoint union of $n\geq 2$ injective sets is injective.

\item Every finite set of cardinality $\geq 2$ is injective.

\item There is a discrete injective set $E$ with $\geq 2$ elements.
\end{enumerate}
\end{theorem}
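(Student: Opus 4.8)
The plan is to build a cycle of implications tying all four statements to $\mathrm{RWEM}$, the engine being the partition reformulation of injectivity (Lemma~\ref{part}) together with its finite-union corollary (Lemma~\ref{partcor}). Three of the links are immediate. Statement~(3) gives~(1) because $2$ is a finite set of cardinality $2$. Statement~(1) gives~(4) because $2=\{0,1\}$ is discrete and has the two distinct elements $0$ and $1$. And (4) gives $\mathrm{RWEM}$ by Lemma~\ref{two}: discreteness of the given set supplies the decidability $\forall x\,(x=x_{0}\vee x\neq x_{0})$ for any chosen $x_{0}$, while ``having $\geq 2$ elements'' supplies some $x_{1}$ with $x_{1}\neq x_{0}$. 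It therefore remains to prove $\mathrm{RWEM}\Rightarrow(1)$, $(1)\Rightarrow(2)$, and $(2)\Rightarrow(3)$.

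The two routine directions use only the earlier lemmas. For $(2)\Rightarrow(3)$ I would write a finite set $E$ of cardinality $n\geq 2$ as the disjoint union of its $n$ singletons along a bijection $E\cong n$; each singleton is injective by Lemma~\ref{singletoninhabited}, so~(2) forces $E$ to be injective. For $(1)\Rightarrow(2)$ I would induct on the number $m$ of summands, the claim being that a disjoint union of $m\geq 1$ injective sets is injective. The base case is trivial, and for $m\geq 2$ one views the union as a $2$-partition into its first $m-1$ summands (injective by the induction hypothesis) and its last summand (injective by hypothesis); since~(1) says $2$ is injective, Lemma~\ref{partcor} with index set $I=2$ makes the whole union injective. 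Specialising to $n\geq 2$ summands yields~(2).

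The heart of the argument, and the step I expect to be the main obstacle, is $\mathrm{RWEM}\Rightarrow(1)$. Using Lemma~\ref{part}, I must extend an arbitrary $2$-partition $(X_{0},X_{1})$ of a set $X$ to a $2$-partition of a superset $Y$. The idea is to place the double negation so that weak excluded middle, rather than full $\mathrm{REM}$, suffices: set
\[
Y_{1}=\{\,y\in Y:\neg\neg(y\in X_{1})\,\},\qquad Y_{0}=\{\,y\in Y:\neg(y\in X_{1})\,\}.
\]
Since $y\in X_{1}$ is atomic and hence restricted, so are its negation and double negation, and both sets exist by restricted separation---a point worth checking carefully, as it is where predicativity could bite. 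The two sets are disjoint because $\neg\neg\neg$ collapses to $\neg$, and they cover $Y$ precisely because $\mathrm{RWEM}$ applied to the restricted formula $y\in X_{1}$ yields $\neg(y\in X_{1})\vee\neg\neg(y\in X_{1})$ for every $y\in Y$. Finally $X_{1}\subseteq Y_{1}$ is immediate and $X_{0}\subseteq Y_{0}$ follows from $X_{0}\cap X_{1}=\emptyset$, so $(Y_{0},Y_{1})$ extends $(X_{0},X_{1})$ and $2$ is injective. Assembling the links gives $\mathrm{RWEM}\Rightarrow(1)\Rightarrow(2)\Rightarrow(3)\Rightarrow(1)$ and $(1)\Rightarrow(4)\Rightarrow\mathrm{RWEM}$, whence all four statements are equivalent to $\mathrm{RWEM}$.
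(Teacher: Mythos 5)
Your proposal is correct and follows essentially the same route as the paper: the same cycle of implications, the same use of Lemmas \ref{part}, \ref{partcor}, \ref{singletoninhabited} and \ref{two}, and the identical choice of $Y_{0}=\{y\in Y:\lnot(y\in X_{1})\}$ and $Y_{1}=\{y\in Y:\lnot\lnot(y\in X_{1})\}$ for the step $\mathrm{RWEM}\Rightarrow(1)$. The only difference is that you spell out details (restricted separation, the induction for $(1)\Rightarrow(2)$, the singleton decomposition for $(2)\Rightarrow(3)$) that the paper leaves as ``plain''.
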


\begin{proof}
Using Lemma \ref{part} we first show that in the presence of RWEM the set $2$
is injective. Let $X\subseteq Y$ be sets. If $(X_{0},X_{1})$ is a $2$%
--partition of $X$, then the subsets%
\begin{equation*}
Y_{0}=\{y\in Y:\lnot \left( y\in X_{1}\right) \}\,,\quad Y_{1}=\{y\in
Y:\lnot \lnot \left( y\in X_{1}\right) \}
\end{equation*}%
of $Y$ form, by RWEM, a $2$--partition of $Y$.

To deduce the second item from the first, let $2$ be injective. By induction
on $n$ it suffices to prove that if $E_{0}$, $E_{1}$ are disjoint injective
sets, then $E_{0}\cup E_{1}$ is injective, which is a special case of Lemma %
\ref{partcor}. It is plain that item 2 implies item 3; that item 1 follows
from item 3; and that item 1 implies item 4. Finally, RWEM\ follows from
item 4 by Lemma \ref{two}.
\end{proof}

By Lemma \ref{retract}, if $\mathbb{N}$ is injective, then so is every $n\in 
\mathbb{N}$; and if $n\in \mathbb{N}$ is injective, then so is every $m\in n$.

According to \cite{BM11} the \emph{General Uniformity
Principle}%
\begin{equation*}
\forall x\exists y\in a\varphi \left( x,y\right) \rightarrow \exists y\in
a\forall x\varphi \left( x,y\right)
\end{equation*}%
is consistent with \textbf{CZF}.

\begin{theorem}
\label{gup} The general uniformity principle implies that the only injective
sets are the singletons.
\end{theorem}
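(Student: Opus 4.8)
The plan is to deduce from the general uniformity principle (GUP) that every element of an injective set coincides with a fixed one; since an injective set is inhabited by Lemma \ref{singletoninhabited}, this forces it to be a singleton. So let $E$ be injective, fix $e_0\in E$, and let $u\in E$ be arbitrary; the goal is to prove $u=e_0$. The device is to apply the uniformity principle to an \emph{extension problem} into $E$ in which the universally quantified variable $x$ is adjoined to $2$ as a fresh point. Concretely, let $f\colon 2\to E$ be the map with $f(0)=e_0$ and $f(1)=u$, and let $\varphi(x,y)$ be the formula
\[
\exists g\,\bigl(g\colon 2\cup\{x\}\to E\ \wedge\ g(0)=e_0\ \wedge\ g(1)=u\ \wedge\ g(x)=y\bigr),
\]
expressing that $f$ extends over the superset $2\cup\{x\}$ to a map taking the value $y$ at $x$.

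First I would verify the hypothesis $\forall x\,\exists y\in E\,\varphi(x,y)$ of GUP: for every set $x$ we have $2\subseteq 2\cup\{x\}$, so injectivity of $E$ extends $f$ to some $g\colon 2\cup\{x\}\to E$, and then $y:=g(x)\in E$ witnesses $\varphi(x,y)$. Applying GUP with $a=E$ yields a single $y_0\in E$ with $\forall x\,\varphi(x,y_0)$. Now I would read this off at two concrete instances. Taking $x=1$: since $1\in 2$ we have $2\cup\{1\}=2$, every witnessing $g$ is forced to satisfy $g(1)=u$, and $g(x)=g(1)=u=y_0$, so $y_0=u$. Taking $x=0$: likewise $2\cup\{0\}=2$, $g(0)=e_0$, and $g(x)=g(0)=e_0=y_0$, so $y_0=e_0$. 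Combining, $u=y_0=e_0$. As $u\in E$ was arbitrary and $E$ is inhabited, $E=\{e_0\}$ is a singleton.

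The delicate point---and the only place where any thought is required---is arranging the formula so that its \emph{uniform} witness carries genuine information, since the premise of GUP is made true rather cheaply (for each separate $x$ an extension exists by injectivity, with no control over its value at $x$). The trick that makes the conclusion bite is that the GUP-variable is adjoined to $2$: because $0,1\in 2$, the two instantiations $x=0$ and $x=1$ force the fresh point to collapse onto a point already named by $f$, so the \emph{single} witness $y_0$ is pinned simultaneously to $e_0$ and to $u$. I expect this to be the crux; by contrast, checking the premise and the two evaluations is routine. It is worth noting that the argument uses only that $f$ is a genuine function $2\to E$ and never that $e_0\neq u$ or that $E$ is discrete, which is essential, as no decidability on $E$ is available.
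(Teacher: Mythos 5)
Your proof is correct, and its strategy is the paper's: apply the uniformity principle to a family, indexed by an arbitrary set $x$, of extension problems for the two--point map $f(0)=e_0$, $f(1)=u$ into the injective set $E$, and then instantiate the uniform witness at $x=0$ and $x=1$, where the adjoined point collapses onto a point already named by $f$. The difference lies in how the ambient object of the extension is built. You adjoin $x$ itself, extending $f$ along the literal inclusion $2\subseteq 2\cup\{x\}$ and evaluating at $x$; the paper instead forms the quotient $3/\!\sim$ of $3=\{0,1,2\}$ by the equivalence relation with $0\sim 1\iff\bot$, $0\sim 2\iff 0\in x$, $1\sim 2\iff 0\notin x$, and evaluates the extension at the class of $2$. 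Your variant has the minor advantage of invoking injectivity only for an actual subset inclusion (which is the paper's definition), with no need to transport injectivity along the embedding $2\rightarrow 3/\!\sim$. On the other hand, the paper's formula $\varphi(x,y)\equiv(0\in x\rightarrow y=u)\wedge(0\notin x\rightarrow y=v)$ is restricted, whereas yours contains an unbounded $\exists g$; this is immaterial here, since the uniformity principle of \cite{BM11} is not confined to restricted formulas, but it is the one point where the two formulations are not interchangeable for free. Your closing observation correctly identifies the crux, namely that the instantiations $x=0$ and $x=1$ pin the single witness simultaneously to $e_0$ and to $u$.
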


\begin{proof}
In view of Lemma \ref{singletoninhabited} it remains to show that if $E$ is
an injective set, and $u,v\in E$, then $u=v$. (A set of this kind will later
be called a subsingleton, see Proposition \ref{subsin} below.) Let $x$ be an
arbitrary set, and let $\sim $ be the equivalence relation on $3=\left\{
0,1,2\right\} $ defined by%
\begin{equation*}
0\sim 1\iff \bot \,;\quad 0\sim 2\iff 0\in x\,;\quad 1\sim 2\iff 0\notin x\,.
\end{equation*}%
The inclusion $2\rightarrow 3$ followed by the projection $3\rightarrow
3/\!\sim $ gives an embedding $2\rightarrow 3/\!\sim $; whence by the
assumption that $E$ is an injective set the map $f:2\rightarrow E$ defined
by $f\left( 0\right) =u$ and $f\left( 1\right) =v$ can be extended to a map $%
g:3/\!\sim \rightarrow E$. Now there is $y\in E$ (for example, the image
under $g$ of the equivalence class of $2\in 3$) for which $\varphi \left(
x,y\right)$ holds where 
\begin{equation*}
\varphi \left( x,y\right) \equiv \left( 0\in x\rightarrow y=u\right) \wedge
\left( 0\notin x\rightarrow y=v\right) \,.
\end{equation*}%
In all, we have $\forall x\exists y\in E\varphi \left( x,y\right) $; whence
by the general uniformity principle there is $y\in E$ such that $\varphi
\left( x,y\right) $ for every $x$. By using this for any such $y\in E $, and
for $x=1$ and $x=0$, we arrive at $y=u$ and $y=v$, respectively; whence $u=v 
$.
\end{proof}

\begin{corollary}
\label{con} It is consistent with $\mathbf{CZF}$\textbf{\ }to assume that
the only injective sets are the singletons. In particular, under the
assumption that $\mathbf{CZF}$\textbf{\ }be consistent it cannot be proved
in $\mathbf{CZF}$\textbf{\ }that there are enough injective sets.
\end{corollary}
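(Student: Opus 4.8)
The plan is to derive both assertions from Theorem~\ref{gup} and the consistency of the general uniformity principle quoted just above, after one elementary observation: no singleton can be the codomain of an embedding whose domain has two elements. For the first assertion, recall from \cite{BM11} that \textbf{CZF} augmented by the general uniformity principle is consistent relative to \textbf{CZF}; by Theorem~\ref{gup} this extension proves that the only injective sets are the singletons. Hence \textbf{CZF} together with the statement ``the only injective sets are the singletons'' is a subtheory of a consistent theory and is therefore itself consistent, which is exactly the first assertion.

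For the second assertion the key step is to verify, inside \textbf{CZF}, the implication
\begin{equation*}
\text{(the only injective sets are the singletons)}\;\rightarrow\;\text{(there are not enough injective sets).}
\end{equation*}
Since ``there are enough injective sets'' means that every set is the domain of an embedding into an injective set, the hypothesis would force every set $X$ to admit an embedding $m:X\hookrightarrow\{e\}$ into a singleton; but then $m(u)=e=m(v)$ together with injectivity gives $u=v$ for all $u,v\in X$, so $X$ has at most one element. Taking $X=2=\{0,1\}$ would yield $0=1$, contradicting the \textbf{CZF}-theorem $0\neq1$; hence not every set embeds into an injective set.

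Combining these, suppose for contradiction that \textbf{CZF} proved ``there are enough injective sets.'' Then \textbf{CZF} with the general uniformity principle would prove both ``the only injective sets are the singletons'' (Theorem~\ref{gup}) and, by the displayed implication, ``there are not enough injective sets,'' and so would be inconsistent---contradicting its consistency relative to \textbf{CZF}. Therefore, assuming \textbf{CZF} is consistent, it cannot prove that there are enough injective sets.

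The only nonroutine ingredient is the displayed implication, whose entire content is the remark that an embedding into a singleton collapses its domain to at most one element; the rest is relative-consistency bookkeeping, which I expect to pose no real difficulty.
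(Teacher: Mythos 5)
Your proposal is correct and follows exactly the route the paper intends: the corollary is stated without an explicit proof precisely because it is meant to follow from the cited consistency of the general uniformity principle with \textbf{CZF} together with Theorem~\ref{gup}, plus the elementary observation that the two-element set $2$ cannot be embedded into a singleton. Your spelling out of that last observation and of the relative-consistency bookkeeping is accurate and complete.
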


\noindent Does \textquotedblleft the only injective sets are the
singletons\textquotedblright\ have any other interesting consequences?

\begin{theorem}
Assume $\mathrm{RDC}$. If $2$ is injective, then $\mathbb{N}$ is injective.
\end{theorem}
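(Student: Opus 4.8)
The plan is to reduce the problem to extending countably many decidable ``levels'' and to thread the level-by-level choices through $\mathrm{RDC}$. First I would record what the hypothesis buys us: by Theorem \ref{weak}, ``$2$ is injective'' is equivalent to $\mathrm{RWEM}$, so every inhabited finite set is injective and, more usefully, every \emph{detachable} subset of a set $X$ extends to a detachable subset of any superset $Y$ (a detachable subset is the same as a $2$-partition, and $2$-partitions extend by Lemma \ref{part}). Since $\mathrm{RDC}$ also yields countable choice, I may freely perform countably many such extensions. The target, by Lemma \ref{part}, is to extend the $\mathbb{N}$-partition $(f^{-1}(k))_{k\in\mathbb{N}}$ of $X$ to an $\mathbb{N}$-partition of $Y$, equivalently to extend a given $f\colon X\to\mathbb{N}$ to some $g\colon Y\to\mathbb{N}$.

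Next I would build a coherent tower of finite approximations. Put $X_{\geq n}=\{x\in X:f(x)\geq n\}$; these are detachable in $X$, decreasing, and $f(x)=|\{n\geq 1:x\in X_{\geq n}\}|$. Using injectivity of $2$ I extend $X_{\geq n}$ to a detachable $Y_{\geq n}\subseteq Y$ with $Y_{\geq n}\cap X=X_{\geq n}$; crucially the extension at level $n+1$ is chosen \emph{inside} $Y_{\geq n}$, so the choice depends on the previous one. This is exactly a dependent sequence of choices, and $\mathrm{RDC}$ packages them into one sequence $(Y_{\geq n})_{n}$ that is decreasing with $Y_{\geq n}\cap X=X_{\geq n}$ throughout. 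Equivalently I obtain coherent total maps $h^{(n)}\colon Y\to\{0,\dots,n\}$ into injective finite sets, each extending $\min(f,n)$ and satisfying $\min(h^{(n+1)},n)=h^{(n)}$.

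Finally I would glue, setting $g(y)=\lim_n h^{(n)}(y)$, equivalently $g(y)=|\{n\geq 1:y\in Y_{\geq n}\}|$; this manifestly restricts to $f$ on $X$. The main obstacle --- and the only place where more than bookkeeping is needed --- is \emph{totality}: I must show that every $y\in Y$ is assigned a genuine natural number, i.e.\ that the sequence $h^{(n)}(y)$ stabilises, equivalently that $\bigcap_n Y_{\geq n}=\emptyset$ with a witness for each point. This is delicate because for $y\notin X$ the predicate $y\in X_{\geq n}$ need not be decidable, and the naive (``$\lnot\lnot$-minimal'') extension leaves a residue of phantom points $y$ with $\lnot\lnot(y\in X_{\geq n})$ for all $n$, which provably can never be drained; without care one only lands in the extended naturals, with a spurious value $\infty$ on points outside $X$. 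I therefore expect the real work to lie in choosing the dependent relation fed to $\mathrm{RDC}$ so that the tower actively drives every element of $Y$ out of the tail $Y_{\geq n}$ at some finite stage, and then in verifying that the resulting $g$ is total; this termination/draining step, powered by $\mathrm{RWEM}$ together with $\mathrm{RDC}$, is where I anticipate the crux of the proof.
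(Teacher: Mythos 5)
Your outline reproduces the paper's own argument almost exactly: your tail sets $X_{\geq n}$ are the paper's $X_{n-1}^{\prime }=\bigcup_{m\geq n}X_{m}$, your RDC--threaded tower of detachable sets $Y_{\geq n}$ is the paper's sequence of nested $2$--partitions $(Y_{n},Y_{n}^{\prime })$ with $Y_{n}^{\prime }=Y_{\geq n+1}$, and your gluing is the paper's passage from the tower to the family $(Y_{n})_{n\in \mathbb{N}}$. The one step you explicitly leave open --- totality of $g$, that is, that every $y\in Y$ leaves the tail $Y_{\geq n}$ at some finite stage, so that $\bigcup_{n}Y_{n}=Y$ --- is precisely the step at which the paper's proof stops as well: it concludes ``in particular, $(Y_{n})_{n\in \mathbb{N}}$ is an $\mathbb{N}$--partition of $Y$'' without further argument. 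So you have located the crux correctly, but your proposal does not contain a proof of the theorem, and the missing step is not mere bookkeeping.

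The gap is genuine. First, the two properties your tower (and the paper's) is required to have do not entail $\bigcup_{n}Y_{n}=Y$, even with classical logic: for $X=\mathbb{N}\subseteq Y=\mathbb{N}\cup \{\mathbb{N}\}$ and $f=\mathrm{id}$, the choices $Y_{n}=\{n\}$ and $Y_{n}^{\prime }=\{m\in \mathbb{N}:m>n\}\cup \{\mathbb{N}\}$ satisfy both required conditions while the extra point $\mathbb{N}$ lies in every $Y_{n}^{\prime }$ and in no $Y_{n}$. Hence the relation fed to $\mathrm{RDC}$ would have to be strengthened by some draining condition, and it is doubtful that this can be done from the stated hypotheses. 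Indeed, for $\alpha \in 2^{\mathbb{N}}$ put $X=\{x\in 1:\exists n\in \mathbb{N}\,(\alpha (n)=1)\}\subseteq 1$ and let $f:X\rightarrow \mathbb{N}$ send $0$ to the least $n$ with $\alpha (n)=1$ (well defined, as $\alpha$ is a decidable sequence); if $g:1\rightarrow \mathbb{N}$ extends $f$ and $N=g(0)$, then $\alpha (N)=1$ establishes $\exists n\,(\alpha (n)=1)$, while $\alpha (N)=0$ refutes it. Thus injectivity of $\mathbb{N}$ implies LPO, whereas the hypothesis that $2$ is injective is, by Theorem \ref{weak}, only $\mathrm{RWEM}$, which for such $\alpha $ yields no more than the weak limited principle of omniscience. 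Any completion of your argument would therefore have to extract LPO from $\mathrm{RWEM}$ plus $\mathrm{RDC}$; the ``phantom points'' you describe are the manifestation of exactly this obstacle, and neither your proposal nor the paper's proof overcomes it.
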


\begin{proof}
We again use Lemma \ref{part}. Let $X\subseteq Y$ be sets, and $\left(
X_{n}\right) _{n\in \mathbb{N}}$ an $\mathbb{N}$--partition of $X$. Set $%
X_{n}^{\prime }=\bigcup_{m>n}X_{m}$ for each $n\in \mathbb{N}$. Note that

\begin{itemize}
\item[---] $(X_{0},X_{0}^{\prime })$ is a $2$--partition of $X$;

\item[---] $(X_{n+1},X_{n+1}^{\prime })$ is a $2$--partition of $%
X_{n}^{\prime }$ for every $n\in \mathbb{N}$.
\end{itemize}

\noindent%
Now suppose that $2$ is injective. We notice the following immediate
consequences:

\begin{itemize}
\item[---] There is a $2$--partition $\left( Y_{0},Y_{0}^{\prime }\right) $
of $Y$ that extends $\left( X_{0},X_{0}^{\prime }\right) $.

\item[---] For each $n\in \mathbb{N}$, given any superset $Y_{n}^{\prime }$
of $X_{n}^{\prime }$ there is a $2$--partition $(Y_{n+1},Y_{n+1}^{\prime })$
of $Y_{n}^{\prime }$ that extends $(X_{n+1},X_{n+1}^{\prime })$.
\end{itemize}

\noindent%
With RDC at hand we can choose a sequence of pairs $(Y_{n},Y_{n}^{\prime
})_{n\in \mathbb{N}}$ of subsets of $Y$ such that

\begin{itemize}
\item[---] $(Y_{0},Y_{0}^{\prime })$ is a $2$--partition of $Y$ that extends 
$(X_{0},X_{0}^{\prime })$;

\item[---] $(Y_{n+1},Y_{n+1}^{\prime })$ is a $2$--partition of $%
Y_{n}^{\prime }$ that extends $(X_{n+1},X_{n+1}^{\prime })$ for every $n\in 
\mathbb{N}$.
\end{itemize}

\noindent%
In particular, we have an $\mathbb{N}$--partition $\left( Y_{n}\right)
_{n\in \mathbb{N}}$ of $Y$ that extends $\left( X_{n}\right) _{n\in \mathbb{N%
}}$.
\end{proof}

With Theorem \ref{weak} we have the following.

\begin{corollary}
If $\mathrm{RDC}$ is assumed, then $\mathbb{N}$ is injective if and only if $%
\mathrm{RWEM}$\textrm{\ }holds.
\end{corollary}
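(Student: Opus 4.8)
The plan is simply to splice together the two results immediately preceding this corollary. Theorem~\ref{weak} records \textquotedblleft the set $2$ is injective\textquotedblright\ as one of the statements equivalent to $\mathrm{RWEM}$, and the theorem just proved shows, assuming $\mathrm{RDC}$, that the injectivity of $2$ entails the injectivity of $\mathbb{N}$. The corollary will follow by running these two facts in the appropriate directions.

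For the implication from $\mathrm{RWEM}$ to the injectivity of $\mathbb{N}$, I would first use Theorem~\ref{weak} to move from $\mathrm{RWEM}$ to the injectivity of $2$, and then apply the preceding theorem---the sole place where $\mathrm{RDC}$ is invoked---to upgrade this to the injectivity of $\mathbb{N}$. For the converse I would use the remark made just after Theorem~\ref{weak}: the set $2$ is an inhabited detachable subset of $\mathbb{N}$, hence a retract of $\mathbb{N}$, so Lemma~\ref{retract} turns the injectivity of $\mathbb{N}$ into the injectivity of $2$, from which Theorem~\ref{weak} again delivers $\mathrm{RWEM}$. This converse direction uses neither $\mathrm{RDC}$ nor anything beyond $\mathbf{CZF}_{0}$.

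I do not anticipate a genuine obstacle, since all the mathematical work has already been done in Theorem~\ref{weak} and in the preceding theorem; the task is only bookkeeping. The one point worth stating explicitly is where $\mathrm{RDC}$ is actually consumed, namely in passing from \textquotedblleft $2$ injective\textquotedblright\ to \textquotedblleft $\mathbb{N}$ injective\textquotedblright, so that the reader sees the equivalence is not symmetric in its dependence on the choice principle.
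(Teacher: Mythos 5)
Your proposal is correct and is essentially the paper's own argument: the corollary is obtained by combining Theorem~\ref{weak} (item~1) with the preceding theorem, and the converse direction via the retract observation is exactly what the remark following Theorem~\ref{weak} records. Nothing further is needed.
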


\section{Strong Injectivity and Power Set}

It is well known that in \textbf{IZF }there still are enough injective sets,
because every power class is an injective set. An analysis of the proof has
prompted the following considerations, for which we need to assume that many
a set consists of sets, as is the case for all sets in \textbf{CZF}.

As usual we write $\mathcal{P}\left( E\right) $ for the power class of a set 
$E$. The axiom of power set is equivalent, in \textbf{CZF}$_{0}$ plus the
axiom of exponentiation, to the statement that $\mathcal{P}\left( 1\right) $
is a set: by exponentiation, if $\mathcal{P}\left( 1\right) $ is a set, then
so is $\mathcal{P}\left( Z\right) \cong \mathcal{P}\left( 1\right) ^{Z}$ for
every set $Z$. Needless to say, $\mathcal{P}\left( 1\right) $ is a set
already if the power class of an arbitrary singleton is a set.

\begin{lemma}
\label{sub}Let $S$ and $s$ be sets. If $S\subseteq \{s\}$, then $\bigcup
S\subseteq s$, and $\bigcup S=s$ precisely when $S=\{s\}$.
\end{lemma}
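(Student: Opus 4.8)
The plan is to split the statement into the inclusion $\bigcup S\subseteq s$ and the biconditional $\bigcup S=s\Leftrightarrow S=\{s\}$, and to reduce everything to the single structural fact that, since $S\subseteq\{s\}$, every element of $S$ is forced to equal $s$.

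First I would prove $\bigcup S\subseteq s$ by a direct element chase. If $x\in\bigcup S$, then there is $y\in S$ with $x\in y$; since $y\in S\subseteq\{s\}$ we have $y=s$, and hence $x\in s$. This uses only the definition of union together with restricted separation, so it is available in \textbf{CZF}$_{0}$ and needs no hypothesis on $s$.

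For the biconditional, the direction $S=\{s\}\Rightarrow\bigcup S=s$ is immediate, since $\bigcup\{s\}=s$ by extensionality. For the converse I would observe that, because $S\subseteq\{s\}$ is already assumed, proving $S=\{s\}$ reduces to establishing the single membership $s\in S$; and, again because every element of $S$ equals $s$, this membership is equivalent to $S$ being inhabited. So the task becomes: manufacture an element of $S$ from the hypothesis $\bigcup S=s$. The natural route is to take an inhabitant $x$ of $s=\bigcup S$, unfold the union to obtain some $y\in S$ with $x\in y$, and then use $y=s$ to conclude $s\in S$, whence $\{s\}\subseteq S$ and finally $S=\{s\}$.

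The main obstacle is precisely this last step: extracting an element of $S$ requires an element of $s$, so the converse goes through only when $s$ is \emph{inhabited}. This is genuinely necessary and not an artefact of the argument, as the case $s=\emptyset$, $S=\emptyset$ shows: there $S\subseteq\{s\}$ and $\bigcup S=\emptyset=s$, yet $S\neq\{s\}$. I would therefore carry out the converse under the assumption that $s$ is inhabited, which is automatic in the intended application, where $s$ is itself a singleton $\{x\}$ coming from the singleton-into-power-set embedding this lemma is meant to analyse, and hence always contains $x$.
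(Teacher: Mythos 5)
Your element chase for $\bigcup S\subseteq s$ and for the direction $S=\{s\}\Rightarrow\bigcup S=s$ is correct, and it is surely the intended argument: the paper states this lemma without any proof at all, so there is nothing to compare against on that score. The substantive point is your treatment of the converse, and here you have not merely hedged your proof --- you have found a genuine (if minor) erratum in the statement itself. With $S=s=\emptyset$ one has $S\subseteq\{s\}$ vacuously and $\bigcup S=\bigcup\emptyset=\emptyset=s$, yet $S=0\neq 1=\{s\}$ by extensionality; so the biconditional as printed is refutable already in \textbf{CZF}$_{0}$, indeed in \textbf{ZF}. Your diagnosis of why is also the right one: since $S\subseteq\{s\}$ is assumed, $S=\{s\}$ amounts to $s\in S$, equivalently to $S$ being inhabited, and the hypothesis $\bigcup S=s$ can only produce an element of $S$ by unfolding an element of $s$. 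Hence the converse holds precisely under the additional assumption that $s$ (equivalently, given the other hypotheses, $S$) is inhabited, exactly as you say.

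This correction does no damage to the rest of the paper. Lemma \ref{sub} is never explicitly cited later, and in the one place where this kind of reasoning is implicitly at work --- the chain of equivalences in Lemma \ref{sgl}, where the role of $s$ is played by the set $Y$ and that of $S$ by $S_{p}\subseteq\{Y\}$ --- the set $Y$ is inhabited by hypothesis ($x\in Y$), so the nontrivial direction is available. One small quibble: in that application $s=Y$ is an arbitrary inhabited element of a $\mathcal{P}_{1}$--complete set, not a singleton coming from the singleton embedding, so your closing remark about the ``intended application'' slightly misidentifies it; but the inhabitedness you need is present either way. In short: your proposal is a correct proof of the corrected statement together with a counterexample showing the correction is necessary.
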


We say that a set $S$ is a \emph{subsingleton} if $S$ is a subset of $\{s\}$
for some set $s$.

\begin{lemma}
\label{singlemaps}Every set can be mapped \emph{onto} a subsingleton set.
\end{lemma}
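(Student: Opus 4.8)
The plan is to collapse $X$ to a single value and then take the image: the range of a constant map is automatically contained in a singleton, which sidesteps any case split on whether $X$ is inhabited.

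First I would fix an arbitrary set $s$ (taking $s=0$ is convenient, since then the resulting subsingleton lies in $\mathcal{P}\left( 1\right) $, the object of interest in this section) and form the constant map $f=\{(x,s):x\in X\}$, which is a set by pairing together with replacement. I would then let $S$ denote its image, that is the range of $f$, and observe that $S$ is again a set, obtained by replacement from $f$ via the second--coordinate projection. By construction each member of $S$ equals $s$, so $S\subseteq \{s\}$ and hence $S$ is a subsingleton, while $f\colon X\rightarrow S$ is onto because $S$ is precisely the range of $f$.

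The one point that needs care---and the only reason the statement is not entirely immediate---is that $S$ must be produced as a range, hence by replacement, rather than by trying to carve it out of $\{s\}$ by separation along the condition ``$X$ is inhabited''. The latter is an unbounded formula and so is not permitted by restricted separation in \textbf{CZF}$_{0}$, whereas the former uses only replacement and is therefore unproblematic. Once $S$ is available as a set in this way, the verifications that $S\subseteq \{s\}$ and that $f$ is surjective onto $S$ are immediate, and the lemma follows.
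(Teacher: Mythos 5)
Your proof is correct and is essentially the paper's own argument made explicit: the paper's one-line proof takes the constant map from the given set into a singleton, and the subsingleton onto which the set is mapped is the image of that map, exactly as you construct it via replacement. One correction to your closing aside, though: ``$X$ is inhabited'' can be written as $\exists x\in X\,(x=x)$, whose only quantifier is bounded by the set $X$, so it \emph{is} a restricted formula, and the set $\{y\in \{s\}:\exists x\in X\,(x=x)\}$ is available by restricted separation in \textbf{CZF}$_{0}$ --- this is precisely the $p_{\varphi}$ construction the paper uses in the section on excluded middle. The replacement route you chose is perfectly fine, but it is not forced on you; the separation route would have worked as well.
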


\begin{proof}
Every set is the domain of a mapping whose codomain is any singleton set.
\end{proof}

We write $\mathcal{P}_{1}\left( E\right) $ for the class of subsingleton
subsets of $E$. While $\mathcal{P}_{1}\left( 0\right) =\mathcal{P}\left(
0\right) $ is a set, $\mathcal{P}_{1}\left( 1\right) =\mathcal{P}\left(
1\right) $ and thus $\mathcal{P}_{1}\left( E\right) $ for any inhabited set $%
E$ are proper classes unless the axiom of power set is assumed.

\begin{proposition}
\label{subsin}The following are equivalent for any set $S$:

\begin{enumerate}
\item $S$ is a subsingleton.

\item $x=y$ for all $x,y\in S$.

\item $S$ is a subset of $\left\{ \bigcup S\right\} $.

\item Every mapping with domain $S$ is an embedding.
\end{enumerate}
\end{proposition}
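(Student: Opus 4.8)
The plan is to prove Proposition~\ref{subsin} by establishing the cycle of implications $(1)\Rightarrow(2)\Rightarrow(3)\Rightarrow(1)$ together with a separate equivalence linking these to $(4)$; this keeps each individual step short. First I would handle $(1)\Rightarrow(2)$: if $S\subseteq\{s\}$, then any $x,y\in S$ both equal $s$ by definition of the singleton $\{s\}$, hence $x=y$. For $(2)\Rightarrow(3)$ the key observation is that once all elements of $S$ coincide, every element of $S$ equals $\bigcup S$; indeed if $x\in S$ then $x\in\bigcup S$ need not follow directly, so I would argue instead that for any $x\in S$ and any $z\in x$ we have $z\in\bigcup S$, and conversely any $z\in\bigcup S$ lies in some $y\in S$ with $y=x$ by $(2)$, giving $x=\bigcup S$ and thus $S\subseteq\{\bigcup S\}$. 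The implication $(3)\Rightarrow(1)$ is immediate, taking $s=\bigcup S$.

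The remaining work is the equivalence with $(4)$, stating that every mapping with domain $S$ is an embedding (injective). The plan is to prove $(2)\Rightarrow(4)$ and $(4)\Rightarrow(2)$. For $(2)\Rightarrow(4)$, suppose all elements of $S$ are equal and let $f$ be any map with domain $S$; given $x,y\in S$ we have $x=y$ outright by $(2)$, so trivially $f(x)=f(y)\Rightarrow x=y$, i.e.\ $f$ is injective. The more substantive direction is $(4)\Rightarrow(2)$: I would argue contrapositively in constructive style by exhibiting, from a failure of $(2)$, a non-injective map. Concretely, if $(4)$ holds then in particular the constant map $S\to 1$ sending every element to $0$ is an embedding; injectivity of this map says precisely that $x=y$ whenever $0=0$, that is, $x=y$ for all $x,y\in S$, which is exactly $(2)$. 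This is clean because the codomain $1$ is a singleton, so the constant map collapses all of $S$ to one point, and injectivity of that collapse forces $S$ to have been a subsingleton already.

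The step I expect to require the most care is $(2)\Rightarrow(3)$, since in the intuitionistic setting of \textbf{CZF} one cannot freely decide whether $S$ is inhabited or empty, and the identity $\bigcup S = s$ must be handled by Lemma~\ref{sub} rather than by case analysis. The safe route is to invoke Lemma~\ref{sub} directly: once $(1)$ or $(2)$ gives $S\subseteq\{s\}$ for an appropriate $s$, the lemma yields $\bigcup S\subseteq s$ and characterises when equality holds, and using $s=\bigcup S$ throughout avoids any appeal to excluded middle. Thus the overall strategy is to route everything through the element $\bigcup S$, which is well-defined for any set by the union axiom, so that no inhabitedness assumption on $S$ is ever needed and the whole proposition remains valid in \textbf{CZF}$_{0}$.
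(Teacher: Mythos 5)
Your proof is correct and complete; the paper states Proposition~\ref{subsin} without proof, and your cycle $(1)\Rightarrow(2)\Rightarrow(3)\Rightarrow(1)$ via extensionality applied to $x=\bigcup S$, together with the constant map $S\rightarrow 1$ for $(4)\Rightarrow(2)$, is exactly the natural argument the authors leave to the reader, valid in \textbf{CZF}$_{0}$ with no inhabitedness or excluded-middle assumptions. The only cosmetic point is that your $(4)\Rightarrow(2)$ step is in fact a direct argument rather than the contrapositive one you announce, and the closing appeal to Lemma~\ref{sub} is not actually needed once $x=\bigcup S$ is established by extensionality.
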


\noindent%
In particular, $\mathcal{P}_{1}\left( E\right) $ is closed under forming
subsets and under taking images: every subset of a subsingleton is a
subsingleton; and if $f:X\rightarrow E$ is a map between sets, then $f\left(
S\right) $ is a subsingleton for any subsingleton $S$ with $S\subseteq X$.

We say that a set $E$ of sets\footnote{%
That $E$ is a set \emph{of sets} is required whenever atoms are allowed in
the set theory under consideration.} is $\mathcal{P}_{1}$--\emph{complete}
if $\bigcup S\in E$ for every $S\in \mathcal{P}_{1}\left( E\right) $. The
prime examples are the power sets: if $Z$ is a set such that $\mathcal{P}%
\left( Z\right) $ is a set, then $\mathcal{P}\left( Z\right) $ is $\mathcal{P%
}_{1}$--complete. In particular, $1=\mathcal{P}\left( 0\right) $ is a $%
\mathcal{P}_{1}$--complete set.

\begin{corollary}
A set $E$ of sets is $\mathcal{P}_{1}$--complete if and only if $\bigcup
f\left( S\right) \in E$ whenever $X$ is a set, $f:X\rightarrow E$ a map, and 
$S\in \mathcal{P}_{1}\left( X\right) $.{}
\end{corollary}

\begin{proposition}
\label{hat}Let $E$ be a set of sets. If $f:X\rightarrow E$ is a map between
sets, $Y$ a superset of $X$, and $y\in Y$, then the set%
\begin{equation*}
\widehat{f}\left( y\right) =\bigcup \left\{ f\left( x\right) \,:x\in
\{y\}\cap X\right\}
\end{equation*}%
possesses the following properties:

\begin{enumerate}
\item If $y\notin X$, then $\widehat{f}\left( y\right) =0$;

\item If $y\in X$, then $\widehat{f}\left( y\right) =f\left( y\right) $;

\item If $E$ is $\mathcal{P}_{1}$--complete, then $\widehat{f}\left(
y\right) \in E$.

\item If $E$ is $\mathcal{P}_{1}$--complete, then $\widehat{f}:Y\rightarrow
E $ extends $f$.

\item If $h:Y\rightarrow E$ extends $f$, then $\widehat{f}\left( y\right)
\subseteq h\left( y\right) $.
\end{enumerate}
\end{proposition}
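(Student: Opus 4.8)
The plan is to treat the five items in the given order, the whole argument resting on one observation: the set $S:=\{y\}\cap X$ is a \emph{subsingleton}, being a subset of $\{y\}$, and it satisfies $S\subseteq X$. Hence $f$ is defined on $S$, the family $\{f(x):x\in S\}$ is a set by replacement, its union $\widehat{f}(y)$ is a set by the union axiom, and $\widehat{f}=\{(y,\widehat{f}(y)):y\in Y\}$ is a set of pairs (hence a map) by a further application of replacement over $Y$. All of this is available already in $\mathbf{CZF}_0$, and crucially it needs neither the axiom of power set nor any decision of $y\in X$ to see that $\widehat{f}(y)$ is well defined.

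For items 1 and 2 I would simply evaluate $S$ under the respective hypothesis, so that no case split on an undecidable disjunction arises. If $y\notin X$, then $S=0$, so $\{f(x):x\in S\}=0$ and $\widehat{f}(y)=\bigcup 0=0$. If $y\in X$, then $S=\{y\}$, so $\{f(x):x\in S\}=\{f(y)\}$ and $\widehat{f}(y)=\bigcup\{f(y)\}=f(y)$.

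Item 3 is the single place where $\mathcal{P}_1$-completeness enters, and it is the conceptual heart of the statement. Since $S\subseteq\{y\}$ is a subsingleton with $S\subseteq X$, the closure of subsingletons under images recorded after Proposition \ref{subsin} gives that $f(S)=\{f(x):x\in S\}$ is a subsingleton of $E$, that is, $f(S)\in\mathcal{P}_1(E)$. $\mathcal{P}_1$-completeness then yields $\widehat{f}(y)=\bigcup f(S)\in E$; equivalently, this is exactly the instance $S\in\mathcal{P}_1(X)$ of the Corollary immediately preceding the proposition. Item 4 is then immediate: by item 3 each value $\widehat{f}(y)$ lies in $E$, so $\widehat{f}$ is a map $Y\rightarrow E$, and by item 2 it agrees with $f$ on $X$, so it extends $f$.

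Finally, for item 5 I would run a direct membership chase. Suppose $h:Y\rightarrow E$ extends $f$ and let $z\in\widehat{f}(y)$. By definition of the union there is $x\in\{y\}\cap X$ with $z\in f(x)$; from $x\in\{y\}$ we get $x=y$, and from $x\in X$ together with the fact that $h$ extends $f$ we get $f(x)=h(x)=h(y)$, whence $z\in h(y)$. Thus $\widehat{f}(y)\subseteq h(y)$. I do not expect a genuine obstacle anywhere in the proof; the only point that deserves to be flagged is the constructive one used throughout, namely that $\{y\}\cap X$ is legitimately a set (and a subsingleton) even when $y\in X$ cannot be decided. This is precisely what makes $\widehat{f}(y)$ the \emph{least} possible value at $y$ of any extension of $f$ landing in $E$, which is the content item 5 isolates.
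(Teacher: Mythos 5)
Your proof is correct and follows essentially the same route as the paper's: evaluate $\{y\}\cap X$ under the hypotheses of items 1 and 2, use that $\{y\}\cap X$ is a subsingleton together with closure of subsingletons under images for item 3, deduce item 4 from items 2 and 3, and run the same membership chase for item 5. The only difference is that you spell out the $\mathbf{CZF}_0$ set-existence justifications, which the paper leaves implicit.
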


\begin{proof}
Part 1 is obvious. Part 2. If $y\in X$, then $\{y\}\cap X=\{y\}$. Part 3.
For every set $y$ the subset $\{y\}\cap X$ of $X$ is a subsingleton. Part 4
is an immediate consequence of parts 2 and 3. Part 5. Let $y\in Y$. If $z\in 
\widehat{f}\left( y\right) $, then $z\in f\left( x\right) $ for some $x\in
\{y\}\cap X$, for which $f\left( x\right) =h(y)$.
\end{proof}

\noindent%
The following is best seen in the light of Lemma \ref{singletoninhabited}.

\begin{corollary}
\label{strongweak}If $E$ is a $\mathcal{P}_{1}$--complete set, then $0\in E$%
, and $E$ is injective.
\end{corollary}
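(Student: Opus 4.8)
The plan is to read off both assertions directly from Proposition \ref{hat}, which has already carried out the substantive work; the corollary is essentially a repackaging, with inhabitedness appearing as a degenerate case.

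For the first claim, $0\in E$, I would observe that the empty set is a subsingleton: it is (vacuously) a subset of $\{s\}$ for any $s$, and it is a subset of $E$. Hence $0\in\mathcal{P}_{1}\left( E\right)$, and $\mathcal{P}_{1}$--completeness gives $\bigcup 0\in E$. Since $\bigcup 0=0$, this yields $0\in E$, which in particular confirms the inhabitedness demanded by Lemma \ref{singletoninhabited}. Equivalently, one may apply parts 1 and 3 of Proposition \ref{hat} to the empty map $f:0\rightarrow E$ together with any point $y$ lying outside its domain: then $\widehat{f}\left( y\right) =0$, and part 3 places $\widehat{f}\left( y\right)\in E$.

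For the second claim I would verify injectivity straight from the definition. Given an arbitrary map $f:X\rightarrow E$ and an arbitrary superset $Y\supseteq X$, the task is to extend $f$ over $Y$. Since $E$ is $\mathcal{P}_{1}$--complete, part 4 of Proposition \ref{hat} states exactly that $\widehat{f}:Y\rightarrow E$ is a well-defined map extending $f$. As $X$, $Y$, and $f$ were arbitrary, $E$ is injective.

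The point worth flagging is that no genuine obstacle remains at this stage: the real content---that $\widehat{f}\left( y\right)$ always lands back in $E$ (part 3, via the fact that $\{y\}\cap X$ is a subsingleton) and that $\widehat{f}$ restricts to $f$ on $X$ (part 2)---is precisely what Proposition \ref{hat} supplies. The only thing one must double-check is the innocuous observation that the empty set counts as a subsingleton, so that $\mathcal{P}_{1}$--completeness may legitimately be invoked to produce $0\in E$.
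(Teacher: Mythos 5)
Your proof is correct and follows essentially the same route as the paper: the paper also obtains $0\in E$ by applying parts 1 and 3 of Proposition \ref{hat} (with $X=0$, $Y=1$, $y=0$, i.e.\ your ``empty map'' version), and obtains injectivity from parts 3 and 4. Your direct observation that $0$ is a subsingleton with $\bigcup 0=0$ is just an unwound form of the same argument.
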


\begin{proof}
Let $E$ be a $\mathcal{P}_{1}$--complete set. To see that $0\in E$, apply
parts 1 and 3 of Proposition \ref{hat} to $X=0$, $Y=1$, and $y=0$; as for $E$
being injective, use parts 3 and 4.
\end{proof}

\noindent%
In particular, $1$ is the only $\mathcal{P}_{1}$--complete subsingleton.
Hence there are plenty of injective sets which are not $\mathcal{P}_{1}$%
--complete (e.g., the singletons different from $1$); and $\mathcal{P}_{1}$%
--completeness is---unlike injectivity---not a structural notion. With this
warning we say that

\begin{itemize}
\item[---] an embedding $i:U\rightarrow V$ of sets is a \emph{strong
embedding }if $i(x)$ is a singleton for every $x\in U$;

\item[---] \emph{there are strongly enough injective sets} if each set is
the domain of a strong embedding whose codomain is a $\mathcal{P}_{1}$%
--complete set.
\end{itemize}

\noindent%
Every strong embedding $f:X\rightarrow Y$ is an embedding; whence if there
are strongly enough injective sets, then there are enough injective sets.

The following prime example of a strong embedding follows the proof that
every topos has enough injective objects \cite[IV.10, Corollary 3]{MM}.

\begin{example}
\label{bsp}Let $Z$ be a set. If $\mathcal{P}\left( Z\right) $ is a set, then
the \emph{singleton embedding}%
\begin{equation*}
Z\hookrightarrow \mathcal{P}\left( Z\right) \,,\quad z\mapsto \{z\}
\end{equation*}%
is a strong embedding, and $\mathcal{P}\left( Z\right) $ is $\mathcal{P}_{1}$%
--complete.
\end{example}

\noindent%
In particular, there are (strongly) enough injective sets in \textbf{IZF}.

\begin{lemma}
\label{sgl}If there is a $\mathcal{P}_{1}$--complete set some element of
which is inhabited, then $\mathcal{P}\left( 1\right) $ is a set.
\end{lemma}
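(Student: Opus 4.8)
The plan is to produce a surjection from the given set $E$ onto the class $\mathcal{P}\left(1\right)$ of all subsets of $1$, and then to invoke replacement to conclude that $\mathcal{P}\left(1\right)$ is a set. Throughout I would fix a $\mathcal{P}_{1}$-complete set $E$ together with an inhabited element $a\in E$ and a witness $c\in a$.

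First I would define the intended surjection as a class function. For each $e\in E$ put $p_{e}=\{z\in 1:c\in e\}$. Since $c\in e$ is a restricted formula, restricted separation makes each $p_{e}$ a genuine subset of $1$, and $e\mapsto p_{e}$ is a class function whose domain is the set $E$. By replacement its image $P=\{p_{e}:e\in E\}$ is a set, and plainly $P\subseteq\mathcal{P}\left(1\right)$.

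The heart of the argument is the reverse inclusion $\mathcal{P}\left(1\right)\subseteq P$, i.e.\ that every subset $p$ of $1$ has the form $p_{e}$ for some $e\in E$; this is where $\mathcal{P}_{1}$-completeness and the inhabited element $a$ enter. Given $p\subseteq 1$, I would form $S_{p}=\{x\in\{a\}:0\in p\}$. As a subset of the singleton $\{a\}$ it is a subsingleton (Proposition \ref{subsin}), and since $a\in E$ it is a subsingleton subset of $E$; hence $e:=\bigcup S_{p}\in E$ by $\mathcal{P}_{1}$-completeness. I would then verify the equivalence $c\in e\iff 0\in p$: if $0\in p$ then $S_{p}=\{a\}$, so $e=a$ and $c\in e$; conversely, if $c\in e=\bigcup S_{p}$ then $S_{p}$ is inhabited, forcing $a\in S_{p}$ and hence $0\in p$. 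Consequently $p_{e}=\{z\in 1:c\in e\}=\{z\in 1:0\in p\}=p$, using that $z\in 1$ means $z=0$.

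Combining the two inclusions yields $\mathcal{P}\left(1\right)=P$, which is a set. The main obstacle is the encoding step in the previous paragraph: one must convert the (in general undecidable) proposition $0\in p$ into an element of $E$ from which the proposition can be read back off, and the only tools available are the closure property of $\mathcal{P}_{1}$-completeness and the marker $c$ furnished by the inhabited element. Without inhabitedness the two truth values could not be distinguished (note that $\mathcal{P}_{1}$-completeness alone gives only $0\in E$ by Corollary \ref{strongweak}); once the encoding is in place, restricted separation and replacement carry out the remaining bookkeeping.
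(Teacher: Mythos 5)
Your proof is correct. The core encoding is the same as the paper's: both arguments turn a subset $p$ of $1$ into a subsingleton subset of $E$ (your $S_{p}\subseteq\{a\}$, the paper's $S_{p}=\{Z\in E:Z=Y\wedge 0\in p\}$), apply $\mathcal{P}_{1}$--completeness to obtain an element of $E$ coding $p$, and use the witness $c\in a$ (the paper's $x\in Y$) to read $0\in p$ back off. Where you differ is in how the set--existence axioms are deployed. The paper works with the injective class map $p\mapsto Y_{p}$ from the \emph{class} $\mathcal{P}(1)$ into the \emph{set} $E$; since replacement cannot be applied to a class domain, it must first identify the image $F$ inside $E$ by an explicit restricted formula, namely $Z\subseteq Y\wedge\forall y\in Y\,(y\in Z\leftrightarrow x\in Z)$ --- this is the delicate step --- and only then apply replacement to the set $F$ along the inverse assignment. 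You instead define the decoding map $e\mapsto p_{e}=\{z\in 1:c\in e\}$ on all of the set $E$, apply replacement once to obtain its image $P$, and use the encoding only to prove surjectivity, i.e.\ $\mathcal{P}(1)\subseteq P$. This bypasses the restricted--separation characterization of the image entirely and is, if anything, slightly more direct; both arguments go through in $\mathbf{CZF}_{0}$.
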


\begin{proof}
Let $E$ be a $\mathcal{P}_{1}$--complete set, and $x\in Y\in E$. For each $%
p\in \mathcal{P}\left( 1\right) $ set%
\begin{equation*}
\,Y_{p}=\{y\in Y:0\in p\},\quad S_{p}=\{Z\in E:Z=Y\wedge 0\in p\}\,.
\end{equation*}%
The following assertions are readily seen to be equivalent: 
\begin{equation*}
0\in p;~Y_{p}=Y;~x\in Y_{p};~Y_{p}~\text{is inhabited};~S_{p}=\left\{
Y\right\} ;~S_{p}~\text{is inhabited}.
\end{equation*}%
Since $Y_{p}=\bigcup S_{p}$ and $S_{p}\in \mathcal{P}_{1}\left( E\right) $,
we have $Y_{p}\in E$. In other words,%
\begin{equation*}
F=\{Y_{p}:p\in \mathcal{P}\left( 1\right) \}
\end{equation*}%
is a subclass of the set $E$. Moreover, 
\begin{equation}
F=\{Z\in E:Z\subseteq Y\wedge \forall y\in Y\,(y\in Z\leftrightarrow x\in
Z)\}\,;  \label{eff}
\end{equation}%
whence $F$ is a set by restricted separation. (To see the part $\supseteq $
of (\ref{eff}), let $Z$ belong to the right-hand side. Set $p=p_{\varphi }$
where $\varphi $ stands for any of the following equivalent assertions:%
\begin{equation*}
Z=Y;~x\in Z;~Z~\text{is inhabited}.
\end{equation*}%
In particular, $0\in p$ is tantamount to any of these assertions; whence $%
Z=Y_{p}$ for this $p$.)

Also, for $p,q\in \mathcal{P}\left( 1\right) $, if $Y_{p}=Y_{q}$, then $p=q$%
. Hence for every $Z\in $ $F$ there is a uniquely determined $p\in \mathcal{P%
}\left( 1\right) $ with $Z=Y_{p}$; and $\mathcal{P}\left( 1\right) $ is a
set by replacement.
\end{proof}

\begin{theorem}
\label{power}With exponentiation, each of the following is equivalent to the
axiom of power set:

\begin{enumerate}
\item There are strongly enough injective sets.

\item Every inhabited set can be strongly embedded into a $\mathcal{P}_{1}$%
--complete set.

\item Every singleton set can be strongly embedded into a $\mathcal{P}_{1}$%
--complete set.

\item The singleton set $1$ can be strongly embedded into a $\mathcal{P}_{1}$%
--complete set.

\item There is a singleton set that can be strongly embedded into a $%
\mathcal{P}_{1}$--complete set.

\item There is an inhabited set that can be strongly embedded into a $%
\mathcal{P}_{1}$--complete set.

\item There is a $\mathcal{P}_{1}$--complete set some element of which is a
singleton.

\item There is a $\mathcal{P}_{1}$--complete set some element of which is
inhabited.
\end{enumerate}
\end{theorem}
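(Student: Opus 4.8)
The plan is to prove the theorem by closing a single cycle of implications that begins and ends with the axiom of power set: the axiom yields (1), each item implies the next in the list $(1)\to(2)\to\cdots\to(8)$, and (8) returns us to the axiom of power set. The point is that items (1)--(8) are arranged in order of decreasing logical strength: (1) quantifies universally over all sets; (2)--(4) successively restrict the domain to inhabited sets, to singletons, and finally to the particular singleton $1$; and (5)--(8) are bare existential statements. Consequently most arrows are immediate weakenings, and the genuine content sits only at the two ends of the cycle, where the already-established Example~\ref{bsp} and Lemma~\ref{sgl} do the work.

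First I would derive (1) from the axiom of power set. Given an arbitrary set $X$, the hypothesis makes $\mathcal{P}(X)$ a set, so by Example~\ref{bsp} the singleton embedding $X\hookrightarrow\mathcal{P}(X)$, $x\mapsto\{x\}$, is a strong embedding whose codomain $\mathcal{P}(X)$ is $\mathcal{P}_{1}$--complete; by Corollary~\ref{strongweak} such a codomain is moreover injective. Since $X$ was arbitrary, each set is the domain of a strong embedding into a $\mathcal{P}_{1}$--complete (injective) set, which is exactly (1).

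Next come the weakening steps. The implications $(1)\to(2)\to(3)\to(4)$ merely restrict the class of admissible domains, using that every inhabited set is a set, every singleton is inhabited, and $1$ is a singleton; the implications $(4)\to(5)$, $(5)\to(6)$, and $(7)\to(8)$ are instantiations, the latter two invoking that a singleton is inhabited. The only step here needing a remark is $(6)\to(7)$: if $i\colon U\rightarrow E$ is a strong embedding of an inhabited set $U$ into a $\mathcal{P}_{1}$--complete set $E$, then picking any $x\in U$ we find, by the very definition of a strong embedding, that $i(x)$ is a singleton, and $i(x)\in E$; hence $E$ is a $\mathcal{P}_{1}$--complete set one of whose elements is a singleton, which is (7).

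Finally I would close the cycle by deriving the axiom of power set from (8). A singleton is inhabited, so (8) falls under the hypothesis of Lemma~\ref{sgl}, which yields that $\mathcal{P}(1)$ is a set; and by the equivalence recalled before Lemma~\ref{sub}, in $\mathbf{CZF}_{0}$ together with exponentiation the statement that $\mathcal{P}(1)$ is a set is equivalent to the axiom of power set. This completes the loop, so all eight items are equivalent to the axiom of power set. I do not expect any serious obstacle in assembling the cycle, since the substance has been isolated into Example~\ref{bsp} for the forward direction and Lemma~\ref{sgl} for the return; the only place demanding even a small argument is $(6)\to(7)$, where one must extract a singleton \emph{element} of the codomain from the strong embedding of an inhabited set, rather than settling for a merely inhabited one.
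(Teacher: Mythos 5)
Your proposal is correct and follows essentially the same route as the paper: the authors likewise establish only the two nontrivial arrows, deducing item~1 from the axiom of power set via Example~\ref{bsp} and recovering power set from item~8 via Lemma~\ref{sgl} together with the equivalence (under exponentiation) of power set with ``$\mathcal{P}(1)$ is a set,'' leaving the chain of weakenings $(1)\rightarrow\cdots\rightarrow(8)$ implicit. Your explicit treatment of the step $(6)\rightarrow(7)$ is a harmless elaboration of what the paper takes for granted.
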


\begin{proof}
Example \ref{bsp} says that the first item follows from the axiom of power
set, which in turn follows from the last item by way of Lemma \ref{sgl}.
\end{proof}

Although the singleton set $1$ is $\mathcal{P}_{1}$--complete, to give an
embedding of $1$ into a $\mathcal{P}_{1}$--complete set requires (Theorem %
\ref{power}) the axiom of power set. Note in this context that the identity
map on a set $S$ fails to be a strong embedding unless $S$ consists of
singletons, in which case the inclusion of $S$ into any superset is a strong
embedding. Now the simplest example of a set consisting of singletons is $%
\{1\}$. By Corollary \ref{strongweak}, however, there is no hope that $\{1\}$
be $\mathcal{P}_{1}$--complete, for it lacks the element $0$. The inclusion
of $\{1\}$ into $2$ is a strong embedding, and $2$ contains $0$. But $2$
cannot be $\mathcal{P}_{1}$--complete unless RWEM holds (Theorem \ref{weak},
Corollary \ref{strongweak}).

\begin{corollary}
If the axiom of power set is false, then $1$ is the only $\mathcal{P}_{1}$%
--complete set.
\end{corollary}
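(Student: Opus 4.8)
The plan is to run the contrapositive of the chain Lemma~\ref{sgl} $\to$ Theorem~\ref{power}, using Corollary~\ref{strongweak} to pin down the single element that a $\mathcal{P}_{1}$--complete set must contain. Recall first that $1=\mathcal{P}\left( 0\right) $ is already $\mathcal{P}_{1}$--complete, so the content of the statement is only that, when power set fails, no \emph{other} $\mathcal{P}_{1}$--complete set can exist. Accordingly, I would fix an arbitrary $\mathcal{P}_{1}$--complete set $E$ and observe, by Corollary~\ref{strongweak}, that $0\in E$. It then suffices to prove that every element of $E$ equals $0$, since $E\subseteq \{0\}$ together with $0\in E$ yields $E=\{0\}=1$ by extensionality.

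Next I would argue that $E$ can have no inhabited element. Suppose, toward a contradiction, that some $Y\in E$ is inhabited. Then $E$ witnesses the hypothesis of Lemma~\ref{sgl}, so $\mathcal{P}\left( 1\right) $ is a set; but in the presence of exponentiation this is equivalent to the axiom of power set (as recalled before Lemma~\ref{sub}), contradicting the assumption that power set is false. Hence $\forall Y\in E\,(\lnot\,Y\text{ is inhabited})$.

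The last step, and the only genuinely constructive point to watch, is the passage from ``not inhabited'' to ``empty''. For $Y\in E$ the statement $\lnot\,(Y\text{ is inhabited})$ unfolds to $\lnot \exists x\,(x\in Y)$, which is intuitionistically equivalent to $\forall x\,(x\notin Y)$, so $Y=0$ by extensionality. Thus every element of $E$ equals $0$, and with $0\in E$ we conclude $E=1$.

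The main obstacle here is conceptual rather than computational: one must route the falsity of power set through its equivalence with ``$\mathcal{P}\left( 1\right) $ is a set'' and then notice that the purely negative conclusion ``$E$ has no inhabited element'' is already strong enough, because over intuitionistic logic a non--inhabited set is provably empty. No choice principle and no instance of excluded middle is needed, so the whole argument stays within \textbf{CZF}$_{0}$ plus exponentiation.
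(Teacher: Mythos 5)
Your proof is correct and follows exactly the route the paper intends: the corollary is stated without proof as an immediate consequence of Theorem~\ref{power} (equivalently, Lemma~\ref{sgl}) together with Corollary~\ref{strongweak}, and your argument --- no element of a $\mathcal{P}_{1}$--complete set $E$ can be inhabited on pain of power set, hence every element of $E$ is empty, and $0\in E$ forces $E=1$ --- is precisely that chain, with the constructively valid step from ``not inhabited'' to ``empty'' correctly justified. You are also right to flag that exponentiation is being used, which matches the ambient hypothesis of Theorem~\ref{power}.
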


As we have noticed before, injectivity is a structural notion, whereas $%
\mathcal{P}_{1}$--completeness lacks this property in general. This defect
can be repaired by enriching the notion of $\mathcal{P}_{1}$--completeness.
We say that a pair $(X,E)$ is a \emph{strongly injective structure}, for
short \emph{sis}, if $X$ is a set and $E$ is a $\mathcal{P}_{1}$--complete
set of subsets of $X$ such that $\bigcup E=X$. As desired, sis is a
structural notion. If $X$ is a set such that $\mathcal{P}(X)$ is a set, then 
$(X,\mathcal{P}(X))$ is a sis. More generally, $(X,\mathcal{T})$ is a sis
whenever $\mathcal{T}$ is a set of subsets of a set $X$ on which $\mathcal{T}
$ is a topology.

\begin{remark}
If $(X,E)$ is a sis, then $0\in E$ and $E$ is injective.
\end{remark}

\begin{proposition}
The following are equivalent for any set $E$ of sets:

\begin{enumerate}
\item $E$ is $\mathcal{P}_{1}$--complete.

\item $(\bigcup E,E)$ is a sis.

\item There is a set $X$ such that $(X,E)$ is a sis.
\end{enumerate}
\end{proposition}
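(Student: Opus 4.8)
The plan is to prove the cycle (1) $\Rightarrow$ (2) $\Rightarrow$ (3) $\Rightarrow$ (1), so that it suffices to recognise $\bigcup E$ as the canonical ambient set and otherwise merely to unwind the definition of a strongly injective structure.

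For (1) $\Rightarrow$ (2), I assume $E$ is $\mathcal{P}_{1}$--complete and verify that $(\bigcup E,E)$ meets every clause of the definition of a sis. Since $E$ is a set, $\bigcup E$ is a set by the union axiom, so the ambient object is admissible. Each $Y\in E$ is a subset of $\bigcup E$, because $y\in Y\in E$ gives $y\in \bigcup E$; hence $E$ is a set of subsets of $\bigcup E$. The $\mathcal{P}_{1}$--completeness requirement holds by hypothesis, and $\bigcup E=\bigcup E$ is trivially the required equation. Therefore $(\bigcup E,E)$ is a sis.

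The remaining implications are immediate. For (2) $\Rightarrow$ (3), the set $\bigcup E$ witnesses the existential claim. For (3) $\Rightarrow$ (1), if $(X,E)$ is a sis for some set $X$, then $E$ is a $\mathcal{P}_{1}$--complete set by the very definition of a sis, which is precisely item (1). This closes the cycle.

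The only step carrying any content is (1) $\Rightarrow$ (2), and even there the single observation needed is that $\mathcal{P}_{1}$--completeness already forces $E$ to consist of subsets of its own union, so that the ambient set $X$ never has to be chosen independently: it may always be taken to be $\bigcup E$. I anticipate no genuine obstacle here, as the whole argument is definitional and relies on nothing beyond the union axiom.
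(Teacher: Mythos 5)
Your proof is correct and is exactly the definitional unwinding the paper intends: the authors state this proposition without proof precisely because, as you observe, the only content is that $\mathcal{P}_{1}$--completeness is an absolute property of $E$ and that every element of $E$ is automatically a subset of $\bigcup E$, so $\bigcup E$ serves as the canonical ambient set. The cycle $(1)\Rightarrow(2)\Rightarrow(3)\Rightarrow(1)$ is the natural decomposition and each step checks out.
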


\section{Injectivity versus Projectivity}

In intuitionistic type theory \textbf{ITT} \cite{ITT} the notion of set is
intensional so that, to represent extensional mathematics in \textbf{ITT} it
is necessary to use the following representation of the sets and functions
of extensional mathematics:

\begin{enumerate}
\item[\textbf{R}] The sets of extensional mathematics are represented as
setoids; i.e. structures $(A,=_{A})$ consisting of a set $A$ (in the sense
of \textbf{ITT}) together with an equivalence relation $=_{A}$ on $A$. The
functions of extensional mathematics are represented as maps between setoids
that are required to respect the equivalence relations.
\end{enumerate}

\noindent Martin-L{\"{o}}f \cite{Martin-Loef} argues that in \textbf{ITT}
the axiom of choice is evident only for the intensional sets of \textbf{ITT}%
, but not the extensional setoids. He shows that, in \textbf{ITT}, the axiom
of choice for setoids is equivalent to the law of excluded middle.

When working in \textbf{ITT} we may refer to the sets of \textbf{ITT} as the 
\emph{intensional sets} in contrast to the setoids which we may call the 
\emph{extensional sets}.

Let $S$ be an intensional set, and $=_{S}$ an equivalence relation on $S$.
As any intensional set, $S$ is also equipped with the finest equivalence
relation $\mathrm{Id}_{S}$. We further have an extensional function $\varphi
:(S,\mathrm{Id}_{S})\rightarrow (S,=_{S})$, defined by $\varphi (s)=s$,
which respects equality just because $\mathrm{Id}_{S}$ is the finest
equivalence relation on $S$. It is clear that $\varphi $ is surjective. This
is why we have enough projective sets when whenever representation R is
adopted.

However, the \textquotedblleft natural dual\textquotedblright\ of this
construction does not imply that we have enough injective sets. Let $x\sim
_{S}y$ be a binary relation defined so that it is always true (and vacuously
an equivalence relation). Clearly, the extensional set $(S,\sim _{S})$ is a
subsingleton. A construction dual, in a sense, to the one of $\varphi $
above is the extensional function $\psi :(S,=_{S})\rightarrow (S,\sim _{S})$%
, again defined by $\psi (s)=s$. Now \emph{if }the intensional set $S$ is
inhabited, \emph{then} the extensional set $(S,\sim _{S})$ is a singleton,
and thus injective. However, there is no reason to believe that $(S,\sim
_{S})$ can be generally proved to be injective.

In all, with the representation R of extensional mathematics, the following
two statements are dual to each other.

\begin{enumerate}
\item[\textbf{A}] \emph{Every set is the codomain of a surjective function
from a projective set.}

\item[\textbf{B}] \emph{Every set is the domain of a surjective function to
a subsingleton set.}
\end{enumerate}

\noindent%
Both A and B\ are valid in \textbf{ITT }under representation R; more
precisely, A is a consequence of \textbf{ITT}'s axiom of choice for
intensional sets. In \textbf{CZF }one can prove B (Remark \ref{singlemaps}),
whereas A is nothing but the axiom of presentation taken \cite{Aczel82} from
the interpretation of \textbf{CZF }in \textbf{ITT}.

\section{Appendix: The Axioms of CZF$_{0}$}

The language of \textbf{CZF} is the first--order language of \textbf{ZF}
with the non--logical symbols $\in $ and $=$. The logical symbols are all
the intuitionistic operators $\perp $, $\wedge $, $\vee $, $\rightarrow $, $%
\exists $, and $\forall $;\ in particular, $\lnot \varphi $ is defined as $%
\varphi \rightarrow \perp $. A formula of \textbf{CZF} is \emph{restricted}
or is a\emph{\ $\Delta _{0}$--formula} if all quantifiers occurring in
it---if any---are bounded:\ that is, they are of the form $\exists x\in y$
or $\forall x\in y$, where $\exists x\in y\,\varphi $ and $\forall x\in
y\,\varphi $ stand for $\exists x\,(x\in y\;\wedge \;\varphi )$ and $\forall
x\,(x\in y\;\rightarrow \;\varphi )$, respectively. As usual, $x\subseteq y$
is a shorthand for $\forall z\,\in x\,\left( z\in y\right) $.

In addition to the usual axioms for intuitionistic first--order logic with
equality, the axioms of \textbf{CZF}$_{0}$ are the following seven
set--theoretic axioms and axiom schemes.

\begin{enumerate}
\item \textbf{Extensionality}\label{extensionality} 
\begin{equation*}
\forall a\,\forall b\;(a\subseteq b\wedge b\subseteq a\,\rightarrow \,a=b)\,.
\end{equation*}

\item \textbf{Pairing} 
\begin{equation*}
\forall a\,\forall b\,\exists x\,\forall y\;(y\in x\,\leftrightarrow
\,y=a\vee y=b)\,.
\end{equation*}

\item \textbf{Union} 
\begin{equation*}
\forall a\,\exists x\,\forall y\;(y\in x\,\leftrightarrow \,\exists z\in
a\,y\in z)\,.
\end{equation*}

\item \textbf{Replacement} \emph{For every formula }$\varphi \left(
x,y\right) $ \emph{in which }$b$ \emph{is not free},%
\begin{equation*}
\forall a\,\left( \forall x\in a\,\exists !y\;\varphi (x,y)\rightarrow
\,\exists b\;\forall y\,(y\in b\leftrightarrow \exists x\in a\,\varphi
(x,y))\right) \,.
\end{equation*}

\item \textbf{Restricted Separation}\label{boundedseparation}\thinspace\ 
\emph{For every }$\Delta _{0}$\emph{--formula }$\varphi \left( y\right) $ 
\emph{in which }$x$ \emph{is not free}, %
\begin{equation*}
\forall a\,\exists x\,\forall y\;(y\in x\,\leftrightarrow \,y\in a\,\wedge
\,\varphi (y))\,.
\end{equation*}

\item \textbf{Strong Infinity} 
\begin{equation*}
\exists x\,\left( \text{Ind}\left( x\right) \wedge \forall y\,\left( \text{%
Ind}\left( y\right) \rightarrow x\subseteq y\right) \right) \,,
\end{equation*}%
\emph{where the following abbreviations are used:}

Empty$\left( y\right) $ \emph{for }$\forall z\in y\,\perp \,;$

Succ$\left( y,z\right) $ \emph{for }$\forall u\,\left( u\in z\leftrightarrow
u\in y\vee u=y\right) \,;$

Ind$\left( x\right) $ \emph{for }$\exists y\in x\,$Empty$\left( y\right)
\wedge \forall y\in x\,\exists z\in x\,$Succ$\left( y,z\right) \,.$
\end{enumerate}

The axiom of strong infinity ensures the existence of a least inductive set,
which uniquely determined set is denoted by $\mathbb{N}$. The empty set $%
\emptyset $ can be defined e.g.~by restricted separation from $\mathbb{N}$;
and by extensionality $y^{\prime }\equiv y\cup \left\{ y\right\} $ is the
one and only successor of each $y\in \mathbb{N}$. Hence the elements of $%
\mathbb{N}$ are $0\equiv \emptyset $, $1\equiv \{0\}$, $2\equiv \{0,1\}$,
etc. Together with restricted separation, strong infinity allows for proofs
by induction of $\Delta _{0}$--formulas; to have this for arbitrary formulas
requires to adopt a further axiom scheme, which can be put as follows:

\begin{enumerate}
\item[7.] \textbf{Mathematical Induction }\emph{For every formula }$\phi
\left( x\right) $,%
\begin{equation*}
\phi \left( 0\right) \wedge \forall y\,\in \mathbb{N\,}\left( \phi \left(
y\right) \rightarrow \phi \left( y^{\prime }\right) \right) \,\rightarrow
\forall x\in \mathbb{N\,}\phi \left( x\right) \,.
\end{equation*}
\end{enumerate}

\noindent%
\textbf{Acknowledgements~~}The initial impetus to the present study is due
to Nicola Gambino. Some of the work on this paper was undertaken when Aczel
and Granstr\"{o}m visited the Mathematical Institute of the University of
Munich. Aczel is grateful to the Mathematical Institute for providing
excellent office space and other facilities, and to the Leverhulme Trust for
funding his visit with an Emeritus Fellowship. Granstr\"{o}m's visit was
enabled by a grant within the MATHLOGAPPS programme of the European Union.

\subsection*{Authors' Addresses}

Peter Aczel:\newline
School of Computer Science, University of Manchester, Manchester M13 9PL,
England; \texttt{\ petera@cs.man.ac.uk}

\medskip \noindent Benno van den Berg{:}\newline
Mathematisch Instituut, Universiteit Utrecht,\newline
P.O. Box 80010, 3508 TA Utrecht, The Netherlands; \texttt{B.vandenBerg1@uu.nl%
}

\medskip \noindent Johan Granstr{\"{o}m:}\newline
{Mathematisches Institut, Universit\"{a}t M\"{u}nchen,}\newline
{Theresienstra\ss e 39, \thinspace \thinspace 80333 M\"{u}nchen, Germany};%
\footnote{%
Current Affiliation: Google Z\"{u}rich, Brandschenkestrasse 110, 8002 Z\"{u}%
rich, Switzerland} \texttt{georg.granstrom@gmail.com}

\medskip \noindent Peter Schuster:\footnote{%
Corresponding author.}\footnote{%
Temporary address (until 30 September 2011): {Mathematisches Institut,
Universit\"{a}t M\"{u}nchen, Theresienstra\ss e 39, \thinspace \thinspace
80333 M\"{u}nchen, Germany; \texttt{pschust@math.lmu.de}}}\newline
Pure Mathematics, University of Leeds, Leeds LS2 9JT, England; {\texttt{%
pschust@maths.leeds.ac.uk}}\newline

\end{document}